\DeclareMathAlphabet{\mathcal}{OMS}{cmsy}{m}{n} 
\DeclareMathOperator{\diag}{diag}
\numberwithin{equation}{section}
\newcommand{\bx}{\pmb{x}}
\newcommand{\bq}{\pmb{q}}
\newcommand{\bv}{\pmb{v}}
\newcommand{\bmf}{\pmb{f}}
\newcommand{\bA}{\pmb{A}}
\newcommand{\bD}{\pmb{D}}
\newcommand{\bI}{\pmb{I}}
\newcommand{\bW}{\pmb{W}}
\newcommand{\bR}{\pmb{R}}
\newcommand{\Rv}[1]{\textcolor{black}{#1}}
\journalname{BIT}
\begin{document}

\title{Low regularity integrators for semilinear parabolic equations with maximum bound principles}

\titlerunning{Low regularity integrators for semilinear parabolic equations }

\author{
Cao-Kha Doan \and
Thi-Thao-Phuong Hoang \and
Lili Ju \and
Katharina Schratz
}

\institute{
C.-K. Doan \at
Department of Mathematics and Statistics, Auburn University, Auburn, AL 36849, USA \\
\email{kcd0030@auburn.edu}           
\and
T.-T.-P. Hoang \at
Department of Mathematics and Statistics, Auburn University, Auburn, AL 36849, USA \\
\email{tzh0059@auburn.edu}
\and
L.  Ju \at
Department of Mathematics, University of South Carolina, Columbia, SC 29208, USA \\
\email{ju@math.sc.edu}
\and
K. Schratz \at
LJLL (UMR 7598), Sorbonne Universit\'e, UPMC, 4 place Jussieu, 75005, Paris, France\\
\email{katharina.schratz@ljll.math.upmc.fr}
}

\date{Received: date / Accepted: date}

\maketitle

\begin{abstract}
This paper is concerned with conditionally structure-preserving,  low regularity time integration methods for a class of semilinear parabolic equations of Allen-Cahn type.  Important properties of such equations include maximum bound principle (MBP) and energy dissipation law; for the former,  that means the absolute value of the solution is pointwisely bounded for all the time by some constant imposed by appropriate initial and boundary conditions.  The model equation is first discretized in space by the central finite difference,  then by iteratively using Duhamel's formula,  first- and second-order low regularity integrators (LRIs) are constructed  for time discretization of the semi-discrete system. The proposed LRI schemes are proved to preserve the MBP and the energy stability in the discrete sense. Furthermore,  their temporal error estimates are also successfully derived under a low regularity requirement that the exact  solution of the semi-discrete problem is only assumed to be continuous in time. Numerical results show that the proposed LRI schemes are more accurate and have better convergence rates than classic exponential time differencing schemes,  especially when the interfacial parameter approaches zero.

\keywords{semilinear parabolic equations \and
low regularity integrators \and
maximum bound principle \and
energy stability \and
error estimates}

\subclass{35K55 \and 65M12 \and 65M22 \and 65R20}
\end{abstract}


\section{Introduction}

Phase-field modeling is an important mathematical tool for solving interfacial problems in scientific and engineering fields.  Phase-field models were first introduced by Fix~\cite{phasefieldmodel1} and Langer~\cite{phasefieldmodel2}, and have gained increasing interest and attention in solidification dynamics \cite{solidification,solidification2} and other areas such as fracture mechanics~\cite{fracture2,fracture3}, viscous fingering~\cite{viscosity,viscosity2}, hydrogen embrittlement~\cite{hydrogen,hydrogen2}, vesicle dynamics~\cite{vesicle,vesicle2}, etc.  A thorough review on phase-field modeling can be found in~\cite{Fengreview}.  The Allen-Cahn equation, originally introduced by Allen and Cahn in \cite{1979}, is a well-known phase-field model used to describe the motion of antiphase boundaries in crystalline solids.  
It belongs to a class of semilinear parabolic equations of the form: \vspace{-0.1cm}
\begin{equation}\label{Allen-Cahn}
\begin{array}{rll}
\partial_t u&=\varepsilon^2\Delta u + f(u),  & \bx\in \Omega, \; t>0, 
\end{array} 
\end{equation}
where $\Omega \subset \mathbb{R}^d \, (d=1,2,3)$ is an open and bounded Lipschitz domain, $u:\overline{\Omega}\times [0,\infty)\rightarrow \mathbb{R}$ is the unknown function,  $\varepsilon>0$ is the interfacial parameter representing the width of the transition regions,  and $f:\mathbb{R}\rightarrow \mathbb{R}$ is a nonlinear function.  An initial boundary value problem associated with equation~\eqref{Allen-Cahn} is obtained by imposing the initial condition 
$u(\bx,0)=u_0(\bx)$ for any $\bx\in\overline{\Omega}$,
and suitable boundary conditions (e.g., periodic or homogeneous Neumann or Dirichlet boundary conditions).

Suppose that the nonlinear function $f$ satisfies the following assumptions:\vspace{4pt}\\
\noindent (A1) {\it $f(\beta)\le 0 \le f(-\beta)$ for some $\beta>0$.} \vspace{2pt}\\
\noindent (A2) {\it $f$ is continuously differentiable and nonconstant on $[-\beta,\beta].$}\vspace{4pt}\\
\noindent Then the solution to \eqref{Allen-Cahn} satisfies the maximum bound principle (MBP) as analyzed in~\cite{Review}, namely if  the initial data is  pointwisely bounded  in  absolute value by $\beta$,  then the solution is also pointwisely bounded  in  absolute value by $\beta$  for all the time, i.e., \vspace{-0.1cm}
\begin{equation}
 \max_{\bx\in\overline{\Omega}} \vert u_{0} (\bx) \vert \leq \beta \quad \Longrightarrow \quad \max_{\bx\in\overline{\Omega}} \vert u (\bx,t) \vert \leq \beta\quad \text{ for all }\thickspace t >0.
\end{equation}
For example, for the Allen-Cahn equation with $f(u)=u-u^3$~\cite{1992}, the solution is bounded by $\beta=1$.  
In addition, as a phase-field type model,  equation \eqref{Allen-Cahn} can be viewed as an $L^2$ gradient flow with respect to the energy functional \vspace{-0.1cm}
\begin{align}\label{energy functional}
 E(u)=\int_{\Omega}\left(\frac{\varepsilon^2}{2}|\nabla u(\bx)|^2+F(u(\bx))\right)d\bx,   
\end{align}
where $F$ is a smooth potential function with $F'(u)=-f(u)$.  Thus, the solution to~\eqref{Allen-Cahn} decreases the energy~\eqref{energy functional} in time:
\begin{equation}
\frac{d}{dt} E(u(t)) =-\int_{\Omega}|\partial_tu(\bx,t)|^{2} \,d\bx\leq 0.
\end{equation}
Such a property is known as the energy dissipation law of the phase transition process.  MBP and energy dissipation are two physical properties of Allen-Cahn type equations~\eqref{Allen-Cahn}, thus they are highly desired to be preserved when constructing numerical schemes for these equations.

Various MBP-preserving schemes for semilinear parabolic equations have been discovered recently. In~\cite{mainpaper}, it was shown that implicit-explicit discretization in time and  central finite difference in space for the Allen-Cahn equation preserve the MBP. The decay of the discrete energy functional along the time is obtained based on the MBP result.  The generalized Allen-Cahn equation with a nonlinear degenerated mobility was studied in~\cite{genAC},  where a first-order semi-implicit scheme was proved to preserve the MBP under some time step constraint (which can be further improved by adding a stabilizing term).  Following these ideas,  both linear and nonlinear second-order Crank-Nicolson/Adams-Bashforth schemes were constructed in \cite{Hou1,Hou2},  which guarantee the MBP and  energy stability for the Allen-Cahn equation as well as its variants.  When solving problems with strong stiff linear terms,  exponential-type integrators such as exponential time differencing (ETD) and integrating factor Runge-Kutta (IFRK) methods have been shown to be more effective.  Based on Duhamel's formula (also known as the variation-of-constants formula) along with approximating the nonlinear term by polynomial interpolations, the ETD schemes~\cite{Luan13,Luan14} evaluate the linear part exactly, ensuring both stability and accuracy.  In \cite{Review}, a general framework of ETD schemes for a class of semilinear parabolic equations was established, where linear and nonlinear operators have to satisfy certain conditions to preserve the MBP. ETD methods were also used for the nonlocal Allen-Cahn equation \cite{nonlocal} and the conservative Allen-Cahn equation~\cite{conserAC}.  A combination of ETD with overlapping domain decomposition was studied in~\cite{existencetheorem}. 
Unlike ETD, the main idea of IFRK methods is to use the integrating factor to eliminate the stiff linear term before applying the conventional Runge-Kutta method. A four-stage, fourth-order IFRK scheme was proposed in \cite{boundeA} which preserves the MBP under some certain condition on the time step size.  By adding a stabilizing constant,  the stabilized IFRK method was introduced in \cite{IFRK} and shown to unconditionally preserve the MBP up to the third order.  \Rv{In addition, uniform $L^{p}-$ bounds, which are weaker than the MBP,  were also studied for numerical discretization of the Allen-Cahn equation in \cite{Lpnorm}.}

Similar to the MBP preserving schemes,  energy stable numerical methods for time integration of the Allen-Cahn type equations as well as other gradient flows have attracted much attention.  Examples of traditional approaches include fully implicit method \cite{implicit1,implicit2}, convex splitting schemes \cite{splitting1,splitting2,splitting3,splitting4}, stabilized semi-implicit methods \cite{semi-im1,mainpaper,semi-im3,semi-im4} and  ETD schemes \cite{ETD_ener1,ETD_ener2,ETD_ener3,Review,nonlocal}.  Some novel linear schemes have been investigated in the past few years such as the invariant energy quadratization (IEQ) schemes~\cite{IEQ1,IEQ2,IEQ3} and the scalar auxiliary variable (SAV) schemes~\cite{SAV1,SAV2,SAV4}, which monotonically decrease certain modified energies. The main idea of these two methods is to transform the energy functional~\eqref{energy functional} into quadratic forms of some new auxiliary variables. Because of the efficiency of the SAV approach in term of computations, many of its variants were also developed~\cite{v_SAV4,v_SAV5,v_SAV6,v_SAV7}. 

It should be noted that convergence and error estimates play an essential role when analyzing a numerical scheme. A common way is to first discretize the model problem (cf.  equation~\eqref{Allen-Cahn}) in space, then apply time integration to the space-discrete problem.  Convergence of the numerical solution to the exact space-discrete solution is usually obtained under some assumptions on the smoothness of the exact solution.  In particular, ETD schemes \cite{Review,nonlocal} require the exact space-discrete solution to be in $C^1$ for the first-order scheme and in $C^2$ for the second-order scheme. Similarly, convergence of the IFRK schemes \cite{boundeA,IFRK} only holds when the exact solution at least belongs to $C^p$ in time, where $p$ is the order of the method. Therefore, it is desirable to develop time integration schemes for Allen-Cahn type equations which only require low regularity on the exact solution but hold similar optimal  error estimates, while still preserving the two intrinsic properties, namely MBP and energy stability.  

The LRIs are recently introduced time discretization techniques for evolution problems, which require weaker regularity assumptions for error analysis than those by classical methods.  The so-called resonance based schemes were developed for nonlinear dispersive equations at low regularity,  including the Korteweg-de Vries (KdV)~\cite{KdV1,KdV2,KdV3},   Schr\"{o}dinger~\cite{Sch1,Sch2,Sch3},  Boussinesq~\cite{Bou},  and Dirac~\cite{Dirac} equations. These schemes are based on Fourier series expansions of the solution (instead of Taylor expansions as in classical methods),  thus they are restricted to periodic boundary conditions only.  High-order resonance-based methods can be constructed using decorated tree series as shown in \cite{resonance}.  Recently,  a new framework of LRIs has been introduced in \cite{LRIgeneralframework} which does not rely on Fourier expansions, thus can be applied to general boundary conditions. The idea is to introduce filter oscillations to treat the dominant oscillations exactly and use a stabilized Taylor series expansion to approximate the lower order parts.  First- and second-order LRIs have been constructed and analyzed in~\cite{LRIgeneralframework}.  The LRIs can be coupled with various spatial discretization techniques, such as finite element methods as considered in \cite{LRINS}, where a first-order semi-implicit LRI scheme was applied to solve the Navier-Stokes (NS) equations.  Analysis of temporal convergence for both space-continuous and fully discrete problems were carried out under a weaker regularity condition than the semi-implicit Euler method and the classical exponential integrator.  

In this paper, we aim to construct first- and second-order LRIs for the time approximation of equation~\eqref{Allen-Cahn} which is discretized in space by the standard central finite difference.  The proposed schemes are obtained by iterating Duhamel's formula as in~\cite{LRIgeneralframework} and are shown to preserve MBP,  from which energy stability and error estimates of the numerical solutions can be proved independently.  In particular, convergence of the fully discrete numerical solution to the exact space-discrete solution is obtained for both first- and second-order schemes, by only assuming that the exact solution is continuous in time  (instead of $C^{1}$ or $C^{2}$ as required by ETD or IFRK methods).  The main contribution of our work can be summarized as follows. Firstly, we do not require high regularity of the solution to the semi-discrete system of \eqref{Allen-Cahn}
to carry out error estimates, the regularity is passed to the nonlinear  function $f$ instead.  Secondly, we optimize the condition on the time step size $\Delta t$ to preserve the MBP, allowing the use of fairly large $\Delta t$.  Thirdly,  the proofs of energy stability rely only on the MBP,  avoiding some extra conditions on the exact solution (needed for  many other methods due to the use of error estimate results for the proof). Finally, numerical results show that the proposed LRIs (especially the second-order scheme) are more accurate and have better convergence rates than the ETD schemes,  especially when the interface parameter  $\varepsilon$ in \eqref{Allen-Cahn} is very small.  We also would like to note that
 the proposed LRI schemes can be straightforwardly applied to other similar systems,  e.g., nonlocal or fractional Allen-Cahn equations~\cite{nonlocal}.

The rest of this paper is organized as follows. 
In Section \ref{section_schemes}, we discretize the model equation~\eqref{Allen-Cahn}  in space using the central finite difference method and  then derive first- and second-order LRI schemes for time integration of the  space-discrete problem.  Next,  MBP and energy stability of the resulting fully discrete numerical solution  are proved in Section \ref{section_properties}; we also discuss the MBP results  under some typical choices of the potential function.  In Section \ref{section_error_estimates}, we carry out convergence and temporal error analysis of the proposed LRI schemes under low regularity assumptions. In Section \ref{section_numerical}, numerical results are presented to illustrate the accuracy and convergence of the proposed schemes. Finally, some concluding remarks are given in Section \ref{section_conclusion}. 
Throughout the paper, we suppose that assumptions (A1) and (A2) hold; any further conditions on $f$, if required,  will be stated explicitly. 

\section{Space-discrete problem and low regularity integrators}\label{section_schemes}

Assume that the spatial domain $\Omega$ in~$\mathbb{R}^d $ is either a finite interval ($d=1$),  a rectangle ($d=2$), or a rectangular parallelepiped  ($d=3$).  We consider a uniform partition of $\Omega$ into rectangular elements; for simplicity,  we assume all elements have equal sides and denote by $h$ the length of the side of each element.  Let $N$ be the number of grid points of the mesh for unknowns,  and denote by $$\bm{u}(t)=(u^1(t),u^2(t),\ldots, u^N(t))^T,$$
where $u^j(t)$ is the approximation of $u(\bx_j,t)$ for $j=1,2,\ldots,N$.  Using central finite differences for spatial discretization of problem~\eqref{Allen-Cahn} with the homogeneous Neumann boundary conditions,  we obtain the following space-discrete system: 
\begin{align}\label{semi-discrete}
 \frac{d\bm u}{dt} = \bA\bm u+\bmf(\bm u),   
\end{align}
where $\bmf(\bm u)=(f(u^1),f(u^2),\ldots, f(u^N))^T$,  $\bA=\varepsilon^2\bD_h$, and $\bD_h$ is defined as in \cite{fEIF}. 
Particularly,  $\bD_h=\bm\lambda_h$ in one dimension,  $\bD_h=\bI \otimes \bm\lambda_h + \bm\lambda_h\otimes \bI$ in two dimensions,  and $\bD_h=\bI\otimes \bI\otimes \bm\lambda_h + \bI\otimes \bm\lambda_h \otimes \bI +\bm\lambda_h\otimes \bI\otimes I$ in three dimensions. Here $\otimes$ denotes the Kronecker product, $\bI$ is the identity matrix, and $\bm\lambda_h$ is a square matrix of order $N$ given by
$$\bm\lambda_h:=\frac{1}{h^2}\begin{bmatrix}
-2&2&0&\ldots&0\\
1&-2&1&\ldots&0\\
\vdots & \ddots & \ddots & \ddots &\vdots\\
0&\ldots&1& -2 &1\\
0&\ldots&0&2&-2
\end{bmatrix}.$$
It is well-known that matrix $\bD_{h}$ is  diagonally dominant with all diagonal entries negative.  We then have the following result~\cite[Lemma 3.3]{boundeA},  which is crucial to the analysis of exponential-type integrators:
\begin{lemma}\label{bode}
For any $\gamma>0$, we have $\|e^{\gamma\bD_h}\|_{\infty} \le 1,$ and therefore
$\|e^{\gamma\bA}\|_{\infty}\le 1.$
\end{lemma}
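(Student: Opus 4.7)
The plan is to show $e^{\gamma\bD_h}$ is a stochastic matrix (nonnegative entries with row sums equal to one), from which the bound on the induced infinity norm follows immediately.

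First I would read off three structural properties of $\bD_h$ directly from the block formula for $\bm\lambda_h$: (i) the diagonal entries are strictly negative, (ii) the off-diagonal entries are nonnegative, and (iii) every row sum of $\bD_h$ equals zero. Property (iii) for $\bm\lambda_h$ is a one-line check on the interior rows ($\tfrac{1}{h^2}(1-2+1)=0$) and on the boundary rows ($\tfrac{1}{h^2}(-2+2)=0$), and it lifts to the Kronecker sums defining $\bD_h$ in two and three dimensions because $(\bI\otimes\bm\lambda_h + \bm\lambda_h\otimes\bI)(\mathbf{1}\otimes\mathbf{1}) = 0$ (and similarly in 3D). The zero row-sum property is the algebraic counterpart of the homogeneous Neumann boundary condition.

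Next I would set $c:=2d/h^2$, which is the common magnitude of the diagonal entries, and split
\[
\bD_h \;=\; -c\bI + \bm B, \qquad \bm B := \bD_h + c\bI.
\]
By (i)--(ii), $\bm B$ has nonnegative entries, and by (iii) together with $\bm B\mathbf{1} = \bD_h\mathbf{1}+c\mathbf{1}=c\mathbf{1}$, each row of $\bm B$ sums to $c$. Because $-c\bI$ and $\bm B$ commute,
\[
e^{\gamma\bD_h} \;=\; e^{-\gamma c}\,e^{\gamma \bm B} \;=\; e^{-\gamma c}\sum_{k=0}^{\infty}\frac{\gamma^{k}\bm B^{k}}{k!}.
\]
Every power $\bm B^k$ has nonnegative entries (nonnegativity is preserved under matrix products and sums), so $e^{\gamma\bm B}\ge 0$ entrywise. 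Moreover, $\bm B^k\mathbf{1}=c^k\mathbf{1}$ by induction, so $e^{\gamma\bm B}\mathbf{1}=e^{\gamma c}\mathbf{1}$. Combining these, $e^{\gamma\bD_h}\mathbf{1}=\mathbf{1}$ and $e^{\gamma\bD_h}\ge 0$ entrywise, hence $\|e^{\gamma\bD_h}\|_\infty = 1$.

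The second statement is then immediate by applying the first with $\gamma$ replaced by $\gamma\varepsilon^2>0$, since $\bA=\varepsilon^2\bD_h$ gives $e^{\gamma\bA}=e^{(\gamma\varepsilon^2)\bD_h}$. The only real obstacle here is the bookkeeping for property (iii) across dimensions; once the zero row-sum is established, everything else is a routine manipulation of a shifted nonnegative generator. No smoothness or spectral decomposition of $\bD_h$ is needed.
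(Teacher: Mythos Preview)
Your argument is correct. The paper itself does not supply a proof of this lemma; it simply cites \cite[Lemma~3.3]{boundeA} (and, in Remark~1, the related condition \eqref{eq:Acond} from \cite{Review}). What you have written is precisely the standard proof that the exponential of a matrix with nonnegative off-diagonal entries and nonpositive row sums is substochastic: shift by $c\bI$ to obtain a nonnegative matrix $\bm B$, use commutativity to factor $e^{\gamma\bD_h}=e^{-\gamma c}e^{\gamma\bm B}$, and track nonnegativity and row sums through the exponential series. This is almost certainly the argument behind the cited references as well.

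Two minor remarks. First, because the Neumann discretization has row sums \emph{equal} to zero, your proof in fact yields the sharper conclusion $\|e^{\gamma\bD_h}\|_\infty = 1$; the inequality in the lemma is stated to cover the more general situation of \eqref{eq:Acond} (e.g.\ Dirichlet boundary conditions), where row sums may be strictly negative and one only gets $e^{\gamma\bD_h}\mathbf{1}\le\mathbf{1}$. Your splitting handles that case too with the obvious one-line modification. Second, the specific value $c=2d/h^2$ is convenient but not essential; any $c$ dominating the diagonal magnitudes works, so the argument goes through verbatim for any matrix satisfying \eqref{eq:Acond}.
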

Based on this result,  the space-discrete problem~\eqref{semi-discrete} is well-posed and preserves the MBP as stated below (the proof is similar to Theorem 2 in \cite{existencetheorem}). 
\begin{theorem}\label{MBP_exact_solution} Given any fixed terminal time $T > 0$.
Assume that the initial data is bounded by $\beta$ in absolute value,  i.e., $\|u_0\|_{L^{\infty}}\le \beta$,  then the semi-discrete system \eqref{semi-discrete} admits a unique solution $\bm u\in \mathcal{C}([0,T];\mathbb{R}^N)$ and 
$$\|\bm u(t)\|_{\infty}\le \beta \quad \text{ for all } \thickspace t\in [0,T]. $$
\end{theorem}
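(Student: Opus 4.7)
The plan is to recast the ODE system \eqref{semi-discrete} as a fixed-point problem through Duhamel's formula and to combine a stabilization trick with Lemma \ref{bode} so that the invariant set $\{\bm v : \|\bm v\|_\infty \le \beta\}$ is preserved by the Picard operator. Concretely, introduce the constant $\kappa := \max_{|s|\le \beta}|f'(s)|$ (finite by assumption (A2)) and rewrite \eqref{semi-discrete} as
\begin{equation*}
\frac{d\bm u}{dt} = (\bA-\kappa\bI)\bm u + \bm g(\bm u),\qquad \bm g(\bm v) := \kappa\bm v + \bmf(\bm v),
\end{equation*}
so that by Duhamel's formula
\begin{equation*}
\bm u(t) = e^{t(\bA-\kappa\bI)}\bm u_0 + \int_0^t e^{(t-s)(\bA-\kappa\bI)}\bm g(\bm u(s))\,ds =: (\Phi \bm u)(t).
\end{equation*}

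Next I would show that for $\tau>0$ sufficiently small, $\Phi$ is a contraction on the closed set $\mathcal{B}_\tau := \{\bm v\in\mathcal{C}([0,\tau];\mathbb{R}^N) : \|\bm v(t)\|_\infty \le \beta\ \forall t\}$ with respect to the sup-in-time norm. The invariance $\Phi(\mathcal{B}_\tau)\subset\mathcal{B}_\tau$ is the crux of the MBP and is the part I expect to require the most care. The key observation is componentwise: on $[-\beta,\beta]$, $\bm g$ acts entrywise via the scalar map $s\mapsto \kappa s + f(s)$, whose derivative $\kappa + f'(s)\ge 0$ makes it nondecreasing, and whose values at the endpoints satisfy $\kappa\beta + f(\beta)\le \kappa\beta$ and $-\kappa\beta + f(-\beta)\ge -\kappa\beta$ thanks to (A1). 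Hence $\|\bm g(\bm v)\|_\infty \le \kappa\beta$ whenever $\|\bm v\|_\infty \le \beta$. Combining with $\|e^{t(\bA-\kappa\bI)}\|_\infty = e^{-\kappa t}\|e^{t\bA}\|_\infty \le e^{-\kappa t}$ from Lemma \ref{bode}, I get
\begin{equation*}
\|(\Phi\bm u)(t)\|_\infty \le e^{-\kappa t}\beta + \int_0^t e^{-\kappa(t-s)}\kappa\beta\,ds = \beta,
\end{equation*}
so $\Phi$ maps $\mathcal{B}_\tau$ into itself, independently of how small $\tau$ is chosen.

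For the contraction property, I would use that $\bm g$ is globally Lipschitz on $[-\beta,\beta]^N$ with some constant $L$ (by (A2) and boundedness of $f'$), so that $\|\Phi\bm u - \Phi\bm v\|_{L^\infty_t L^\infty_x}\le \tau L\|\bm u-\bm v\|_{L^\infty_t L^\infty_x}$, which contracts for $\tau<1/L$. The Banach fixed point theorem then produces a unique $\bm u\in\mathcal{C}([0,\tau];\mathbb{R}^N)$ satisfying the mild formulation and hence \eqref{semi-discrete}, with $\|\bm u(t)\|_\infty\le\beta$ on $[0,\tau]$. Finally, since $\tau$ depends only on $L$ (and not on the initial data provided $\|\bm u(t_0)\|_\infty\le \beta$), restarting from $t=\tau$ with initial datum $\bm u(\tau)$—which still obeys the $\beta$ bound—allows me to continue the solution by finitely many such steps up to any prescribed $T$, establishing both global existence on $[0,T]$ and the MBP simultaneously. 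Uniqueness on $[0,T]$ follows from Gronwall applied to the difference of two solutions once we know both live in $[-\beta,\beta]^N$.
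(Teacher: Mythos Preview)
Your argument is correct and complete. The stabilization $\bA\mapsto \bA-\kappa\bI$ together with the monotonicity of $s\mapsto \kappa s+f(s)$ on $[-\beta,\beta]$ gives exactly the pointwise bound $\|\bm g(\bm v)\|_\infty\le\kappa\beta$, and the identity $e^{-\kappa t}\beta+\int_0^t e^{-\kappa(t-s)}\kappa\beta\,ds=\beta$ makes the invariance of $\mathcal{B}_\tau$ transparent; the rest is routine. One small remark on uniqueness: the Banach fixed point argument yields uniqueness only within $\mathcal{B}_\tau$, so to rule out a hypothetical second solution that exits $[-\beta,\beta]^N$ you should invoke the local Lipschitz continuity of $f$ (assumption (A2)) and standard Picard--Lindel\"of, which forces any solution to coincide with your fixed point on a neighbourhood of $0$ and hence to remain in the ball.

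As for comparison with the paper: the paper does not actually prove Theorem~\ref{MBP_exact_solution} but defers to \cite[Theorem~2]{existencetheorem}. The approach there is the same stabilized Duhamel/contraction strategy you have written out, so your proposal is essentially the intended proof. It is also worth noting that your key scalar observation---that $s\mapsto \kappa s+f(s)$ is nondecreasing and maps $[-\beta,\beta]$ into $[-\kappa\beta,\kappa\beta]$---is the continuous-time cousin of Lemma~\ref{lemma_ftongquat}, which the paper uses for the discrete MBP.
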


\Rv{
\begin{remark}
The result of Theorem \ref{MBP_exact_solution} still holds  when one imposes the periodic boundary condition or the Dirichlet boundary condition (where the Dirichlet data is also assumed to be bounded by $\beta$ in absolute value); see \cite{Review} for more details.  In addition,  other types of spatial discretizations (e.g., finite volume and  lumped-mass finite element) for problem~\eqref{Allen-Cahn} can also be considered without affecting the analysis presented in this paper,  provided that the corresponding matrix $\bA=(a_{ij})_{N\times N}$ satisfies the following property~\cite[Proposition 2.5]{Review}:
\begin{equation} \label{eq:Acond}
\vert a_{ii} \vert \geq \sum_{j=1, j\neq i}^{N} \vert a_{ij}\vert, \quad a_{ii} <0, \quad a_{ij} \geq 0 \; (j \neq i).
\end{equation}
A slightly more relaxed condition on $\bA$ for Lemma 1 to hold can be further  found in \cite[Lemma 3.3]{boundeA}.
\end{remark}}

For the time discretization,  let us consider a uniform partition of the time interval $[0,T]: 0=t_0<t_1<\cdots< t_M=T$, with the step size $\Delta t = \frac TM$. The exact (in time) solution to \eqref{semi-discrete} at each time level is given by Duhamel's formula:
\begin{align}
    \bm u(t_{m+1})= e^{\Delta t \bA}\bm u(t_m) + \int_0^{\Delta t} e^{(\Delta t -s)\bA} \bmf(\bm u(t_m+s))\,ds,\;\; m=0,1,\ldots,M-1.  \label{exact1}
\end{align}
The main idea of LRIs developed in \cite{LRIgeneralframework,LRINS} is to use again Duhamel's formula to compute $\bm u(t_m+s)$ in \eqref{exact1}.  In particular, we have,  for any $s\in [0,\Delta t]$:
\begin{align}
    \bm u(t_m+s)&= e^{s\bA}\bm u(t_m)+\int_0^s e^{(s-\sigma)\bA} \bmf(\bm u(t_m+\sigma))\,d\sigma. \label{exact2}
\end{align}
Using \eqref{exact2} to approximate $\bm u(t_m+s)$ in \eqref{exact1} leads to the first- and second-order LRIs described below. 

For convenience of presentation, we introduce  the following notation: for any function $g:\mathbb{R}\to\mathbb{R}$,  let $\bm g: \mathbb{R}^{N}\to\mathbb{R}^{N}$ be defined as $\bm g(\bm v):=(g(v_1),g(v_2),\ldots,g(v_N))^T$, for any $\bm v=(v_1,v_2,\ldots,v_N)^T\in\mathbb{R}^N$; when $g$ is sufficiently smooth, we denote by
\begin{equation} \label{eq:deri_notation}
\frac{\partial^p\bm g}{\partial \bm v^p}(\bm v):=\diag(g^{(p)}(v_1),g^{(p)}(v_2),\ldots,g^{(p)}(v_N)),\quad p\in \mathbb{N}.
\end{equation}
Note that when $p=1$,  \eqref{eq:deri_notation} is simply the Jacobian of $\bm g$. In addition, we make use of the notation~$\odot$ for  denoting element-wise operations, e.g., 
$\bm v^{\odot 2}:=(v_1^2,v_2^2,\ldots,v_N^2)^T$ and $\bm u \odot \bm v:=(u_{1}v_1,u_{2}v_2,\ldots,u_{N}v_N)^T.$
\subsection{First-order low regularity integrators}

Let us take  $\bm U_{0} = \bm u(0)$. For first-order schemes, we approximate $\bm u(t_m+s)$ by $e^{s\bA}\bm U_m$ in \eqref{exact1} and obtain:
for $m=0,1,\ldots,M-1$,
\begin{align}
\bm U_{m+1}&=e^{\Delta t \bA} \bm U_m + \int_0^{\Delta t} e^{(\Delta t -s)\bA} \bmf(e^{s\bA}\bm U_m)\,ds =e^{\Delta t \bA} \bm U_m + \int_0^{\Delta t} \bm g(s)\,ds,   \label{1stscheme}
\end{align}
where $\bm g(s)=e^{(\Delta t -s)\bA} \bmf(e^{s\bA}\bm U_m)$.   We then compute the integral on the right-hand side using one-point quadrature rules,  
which leads to the following schemes:
\begin{itemize}
    \item[i)] {\bf LRI1a} with $\bm g(s)$ on $[0,\Delta t]$ approximated by $\bm g(0)$ (as in \cite{LRIgeneralframework}):
\begin{align}\label{LRI1a} 
    \bm U_{m+1}=e^{\Delta t \bA}(\bm U_m+\Delta t\bmf(\bm U_m)).
\end{align}
\item[ii)] {\bf LRI1b} with $\bm g(s)$ on $[0,\Delta t]$ approximated by $\bm g(\Delta t)$: 
    \begin{align}\label{LRI1b}
   \bm U_{m+1}=e^{\Delta t\bA} \bm U_m + \Delta t \bmf(e^{\Delta t\bA} \bm U_m).
    \end{align}
\end{itemize}
We remark that the LRI1a scheme is identical to the first-order IFRK scheme~\cite{SSPRK,boundeA}, while the LRI1b scheme (first proposed in~\cite{LRINS}) is essentially different from the IFRK or ETD schemes. 

\subsection{Second-order low regularity integrator}

Let us approximate $\bm u(t_m+s)$ by $e^{s\bA}\bm U_m+s\bmf(\bm U_m)$  from \eqref{exact2}, and plug that into \eqref{exact1} to obtain: for $m=0,1,\ldots,M-1$,
$$\bm U_{m+1}= e^{\Delta t \bA}\bm U_m + \int_0^{\Delta t} e^{(\Delta t -s)\bA} \bmf(e^{s\bA}\bm U_m+s\bmf(\bm U_m))\,ds.$$
By replacing $\bmf(e^{s\bA}\bm U_m+s\bmf(\bm U_m))$ by $\bmf(e^{s\bA}\bm U_m)+s\frac{\partial \bmf}{\partial \bm u}(e^{s\bA}\bm U_m)\bmf(\bm U_m)$,  we arrive at
\begin{align*}
\bm U_{m+1}=e^{\Delta t\bA}\bm U_m + \int_0^{\Delta t}\bm g(s)\,ds + \int_0^{\Delta t}se^{(\Delta t -s)\bA}\frac{\partial \bmf}{\partial \bm u}(e^{s\bA}\bm U_m)\bmf(\bm U_m)\,ds.
\end{align*}
Finally,  the LRI2 scheme is defined by computing the integral $\int_0^{\Delta t}\bm g(s)\,ds$ using the trapezoid rule and  approximating $e^{(\Delta t -s)\bA}\frac{\partial \bmf}{\partial \bm u}(e^{s\bA}\bm U_m)\bmf(\bm U_m)$ by $e^{\Delta t \bA}\frac{\partial \bmf}{\partial \bm u}(\bm U_m)\bmf(\bm U_m)$ in the above equation:
\begin{align}\label{LRI2a}
\bm U_{m+1} = e^{\Delta t\bA} \bm U_m +\frac{\Delta t}{2}\left[e^{\Delta t\bA} \bmf(\bm U_m)+\bmf(e^{\Delta t\bA} \bm U_m)\right] +\frac{\Delta t^2}{2}e^{\Delta t\bA} \frac{\partial \bmf}{\partial \bm u}(\bm U_m) \bmf(\bm U_m). \vspace{-0.2cm}
\end{align}

\section{Properties of the fully discrete numerical solutions}\label{section_properties}

\subsection{Discrete maximum bound principle}\label{section_MBP}

We shall prove that the LRI1 and LRI2 schemes proposed in the previous section satisfy the MBP in the discrete sense.
We first state some preliminary result needed for the MBP proof.  It is easy to see from assumptions (A1) and (A2) that 
$\min\limits_{ |x|\le \beta} f'(x)<0$. Define $\omega_0:=-\frac{1}{\min\limits_{ |x|\le \beta} f'(x)}>0$. 

\begin{lemma}\label{lemma_ftongquat}
It holds that
$$|x+\omega f(x)|\le \beta \quad \text{ for all }\thickspace x\in [-\beta,\beta]\thickspace\text{ and } \thickspace\omega\in (0,\omega_0].$$
\end{lemma}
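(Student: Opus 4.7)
The plan is to prove the inequality by analyzing the real-valued auxiliary function $g_{\omega}(x) := x + \omega f(x)$ on $[-\beta,\beta]$. Since the desired conclusion is pointwise in $x$, it suffices to establish $-\beta \le g_{\omega}(x) \le \beta$ for every $x$ in this interval and every admissible $\omega$. The key structural observation, which drives everything, is that the definition of $\omega_0$ is precisely tailored to make $g_{\omega}$ monotone nondecreasing on $[-\beta,\beta]$.

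First I would compute $g_{\omega}'(x) = 1 + \omega f'(x)$. Using the hypothesis $\omega \in (0,\omega_0]$ together with the definition $\omega_0 = -1/\min_{|x|\le \beta} f'(x)$, one obtains
\begin{equation*}
\omega f'(x) \;\ge\; \omega \min_{|y|\le\beta} f'(y) \;\ge\; \omega_0 \min_{|y|\le\beta} f'(y) \;=\; -1,
\end{equation*}
so $g_{\omega}'(x) \ge 0$ on $[-\beta,\beta]$ (note that $\min f'(y) < 0$ is what makes $\omega_0$ positive, so the inequality $\omega\min f' \ge \omega_0 \min f'$ flips correctly). Hence $g_{\omega}$ is nondecreasing on $[-\beta,\beta]$.

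Second, I would apply assumption (A1) at the two endpoints. Monotonicity yields
\begin{equation*}
g_{\omega}(x) \;\le\; g_{\omega}(\beta) \;=\; \beta + \omega f(\beta) \;\le\; \beta,
\end{equation*}
since $f(\beta)\le 0$ and $\omega>0$, and symmetrically
\begin{equation*}
g_{\omega}(x) \;\ge\; g_{\omega}(-\beta) \;=\; -\beta + \omega f(-\beta) \;\ge\; -\beta,
\end{equation*}
since $f(-\beta)\ge 0$. Combining the two bounds gives $|g_{\omega}(x)|\le \beta$ for all $x\in[-\beta,\beta]$, which is the claim.

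I expect no serious obstacle here: the entire argument reduces to the single derivative-sign computation in the first step, and the endpoint evaluations then follow immediately from (A1). The only subtlety worth highlighting is the role of assumptions (A1) and (A2) in guaranteeing that $\omega_0$ is a well-defined positive number (continuity of $f'$ on the compact interval ensures the minimum is attained, and the two-sided sign condition on $f$ together with nonconstancy forces this minimum to be strictly negative), which the authors have already noted just above the statement.
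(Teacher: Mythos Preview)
Your proposal is correct and follows essentially the same approach as the paper: define the auxiliary function $x+\omega f(x)$, show its derivative $1+\omega f'(x)$ is nonnegative on $[-\beta,\beta]$ from the definition of $\omega_0$, and then use monotonicity together with assumption (A1) at the endpoints to conclude. The paper's proof is slightly terser (it does not spell out the inequality chain $\omega f'(x)\ge \omega_0\min f' = -1$), but the argument is the same.
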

\begin{proof}
Let $f_0(x)=x+\omega f(x)$, we have 
$f_0'(x)=1+\omega f'(x)\ge 0$, for any $\omega\in (0,\omega_0]$ and $x\in [-\beta,\beta].$ Moreover, as $f(\beta)\le 0\le f(-\beta)$,  we deduce that
$$-\beta\le f_0(-\beta)\le f_0(x)\le f_0(\beta)\le \beta \quad \text{ for all }\thickspace x\in [-\beta,\beta], $$
which concludes the proof. 
\end{proof}
Both LRI1a and LRI1b schemes preserve the discrete MBP under certain time step constraint as stated below. 
\begin{theorem} \label{thrm:MBP_LRI1}
Suppose that $\|u_0\|_{L^{\infty}}\le \beta$, then the numerical solution $\{\bm U_m\}$  generated by the LRI1a scheme (cf.  Equation~\eqref{LRI1a}) or the  LRI1b scheme (cf.  Equation~\eqref{LRI1b}) is also pointwisely bounded in the absolute value by $\beta$, i.e.,
$$\|\bm U_m\|_{\infty}\le \beta, \quad m=0,1,\ldots,M,$$
provided that $0<\Delta t\le \omega_0.$
\end{theorem}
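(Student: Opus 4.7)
The plan is to prove the theorem by induction on $m$. The base case $\|\bm U_0\|_\infty = \|u_0\|_{L^\infty}\le \beta$ is immediate from the hypothesis. For the inductive step, I would observe that both schemes factor into a composition of two MBP-preserving operations, namely a nonlinear pointwise update of the form $\bm v \mapsto \bm v + \Delta t\, \bmf(\bm v)$ and the linear propagator $e^{\Delta t \bA}$. The key auxiliary results are already in place: Lemma \ref{bode} gives $\|e^{\Delta t \bA}\|_\infty \le 1$, so the exponential is a contraction in the $\infty$-norm, while Lemma \ref{lemma_ftongquat} (applied componentwise with $\omega = \Delta t$) shows that if $\|\bm v\|_\infty \le \beta$ and $0 < \Delta t \le \omega_0$, then $\|\bm v + \Delta t\,\bmf(\bm v)\|_\infty \le \beta$.

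For LRI1a, I would rewrite \eqref{LRI1a} as $\bm U_{m+1} = e^{\Delta t \bA}\bm w_m$ where $\bm w_m := \bm U_m + \Delta t\,\bmf(\bm U_m)$. Applying Lemma \ref{lemma_ftongquat} componentwise to $\bm w_m$ using the inductive hypothesis $\|\bm U_m\|_\infty \le \beta$ and the assumption $\Delta t \le \omega_0$ yields $\|\bm w_m\|_\infty \le \beta$; then Lemma \ref{bode} gives $\|\bm U_{m+1}\|_\infty \le \|\bm w_m\|_\infty \le \beta$.

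For LRI1b, I would reverse the order: set $\bm V_m := e^{\Delta t \bA}\bm U_m$, so that by Lemma \ref{bode} and the inductive hypothesis $\|\bm V_m\|_\infty \le \beta$. Then the scheme \eqref{LRI1b} reads $\bm U_{m+1} = \bm V_m + \Delta t\,\bmf(\bm V_m)$, and Lemma \ref{lemma_ftongquat} applied componentwise to $\bm V_m$ (again with $\omega = \Delta t \le \omega_0$) produces $\|\bm U_{m+1}\|_\infty \le \beta$.

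There is no serious obstacle in this argument once Lemmas \ref{bode} and \ref{lemma_ftongquat} are available; the only subtle point is the sharpness of the time step restriction. The choice $\omega_0 = -1/\min_{|x|\le \beta} f'(x)$ is precisely the largest $\omega$ for which $x \mapsto x + \omega f(x)$ remains monotonically nondecreasing on $[-\beta,\beta]$, so this bound on $\Delta t$ is essentially the best attainable via this compositional argument, and it is what allows the nonlinear pointwise update to be a self-map of $[-\beta,\beta]^N$ in the $\infty$-norm. Both schemes work because the exponential acts as a contraction regardless of which side of the nonlinear step it is placed on.
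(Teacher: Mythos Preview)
Your proposal is correct and matches the paper's proof essentially line for line: induction on $m$, with the inductive step for each scheme handled by composing Lemma~\ref{lemma_ftongquat} (the pointwise nonlinear update keeps $[-\beta,\beta]$ invariant when $\Delta t\le\omega_0$) and Lemma~\ref{bode} (the exponential propagator is an $\infty$-norm contraction), in the appropriate order for LRI1a versus LRI1b. The only difference is cosmetic---you name the intermediate vectors $\bm w_m$ and $\bm V_m$ explicitly, whereas the paper compresses each case into a single inequality chain.
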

\begin{proof}
By induction, we only need to prove: if $\|\bm U_m\|_{\infty}\le \beta$, then it also holds that $\|\bm U_{m+1}\|_{\infty}\le~\beta$. 
For the LRI1a scheme,  by applying Lemmas \ref{bode} and \ref{lemma_ftongquat}, we deduce from~\eqref{LRI1a} that
$$\|\bm U_{m+1}\|_{\infty}\le \|e^{\Delta t \bA}\|_{\infty}\|\bm U_m+ \Delta t \bmf(\bm U_m)\|_{\infty}\le \beta \quad \text{ for all }\thickspace \Delta t \in (0,\omega_0].$$ 
For the LRI1b scheme,  as $\|e^{\Delta t \bA}\bm U_m\|_{\infty}\le \beta$ (Lemma \ref{bode}),  together with Lemma \ref{lemma_ftongquat} we obtain
$$\|\bm U_{m+1}\|_{\infty}=\|e^{\Delta t\bA} \bm U_m + \Delta t \bmf(e^{\Delta t\bA} \bm U_m)\|_{\infty}\le \beta \quad \text{ for all }\thickspace\Delta t\in (0,\omega_0].$$
The proof is thus completed.
\end{proof}
Unlike the LRI1 schemes, the LRI2 scheme requires a stronger condition on $f$ to obtain the discrete MBP, namely: \vspace{5pt}\\ 
\noindent (A3) {\it $f$ is twice continuously differentiable and nonconstant on $[-\beta,\beta].$}  \vspace{5pt}\\
The MBP preservation of the LRI2 scheme is guaranteed by the following theorem.  Define
$$\omega_1:=-\min_{|x|\le\beta}[f''(x)f(x)].$$
\begin{theorem} \label{thrm:MBP_LRI2}
Suppose that $f$ satisfies  (A1) and (A3).   If $\|u_0\|_{L^{\infty}}\le \beta$,  then the numerical solution $\{\bm U_m\}$  generated by the LRI2 scheme (cf.  Equation~\eqref{LRI2a}) is also pointwisely bounded in the absolute value by $\beta$, i.e.,
$$\|\bm U_m\|_{\infty}\le \beta, \quad\thickspace m=0,1,\ldots,M,$$
provided that $0<\Delta t \le \delta_0\omega_0$, where $\delta_0$ is given by
$$\delta_0=\left \{ \begin{array}{cl}
    \min\{1,\delta\}, & \text{if } \omega_1>0,\\
    1, & \text{if } \omega_1\le 0, 
    \end{array}\right.$$
with $\delta=\frac{-1+\sqrt{1+7\omega_0^2\omega_1}}{2\omega_0^2\omega_1}$ if $\omega_1>0$.
\end{theorem}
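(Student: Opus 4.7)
My plan is to bound $\bm U_{m+1}$ by exhibiting it as a convex combination of two vectors whose infinity norms can each be forced below $\beta$ via Lemma \ref{bode} and Lemma \ref{lemma_ftongquat}. Introducing a free parameter $\mu\in(0,1)$, I would rewrite \eqref{LRI2a} in the form
\begin{equation*}
\bm U_{m+1} \;=\; \mu\, V_2 \;+\; (1-\mu)\, e^{\Delta t\bA}\,V_1,
\end{equation*}
with
\begin{equation*}
V_2 := e^{\Delta t\bA}\bm U_m + \tfrac{\Delta t}{2\mu}\,\bmf\bigl(e^{\Delta t\bA}\bm U_m\bigr),\qquad V_1 := \bm U_m + \tfrac{\Delta t}{2(1-\mu)}\,\tilde{\bmf}(\bm U_m),
\end{equation*}
where the auxiliary nonlinearity $\tilde{\bmf}(\bm v):=\bmf(\bm v)+\Delta t\,\tfrac{\partial\bmf}{\partial\bm u}(\bm v)\bmf(\bm v)$ acts componentwise as $\tilde f(x)=f(x)\bigl[1+\Delta t f'(x)\bigr]$. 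A direct expansion would verify that this identity reproduces \eqref{LRI2a}. By Lemma \ref{bode} and the convexity of the combination, the proof would then reduce to establishing $\|V_1\|_\infty,\|V_2\|_\infty\le\beta$.

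The bound on $V_2$ would follow from an LRI1b-style argument: Lemma \ref{bode} gives $\|e^{\Delta t\bA}\bm U_m\|_\infty\le\beta$ (by the induction hypothesis), and Lemma \ref{lemma_ftongquat} applied componentwise with $\omega=\Delta t/(2\mu)$ yields $\|V_2\|_\infty\le\beta$ provided $\mu\ge\Delta t/(2\omega_0)$. For $V_1$, I would regard the componentwise map $x\mapsto x+\tfrac{\Delta t}{2(1-\mu)}\tilde f(x)$ as a step of the same shape as in Lemma \ref{lemma_ftongquat}, but built from $\tilde f$ in place of $f$. The key structural check is that $\tilde f$ inherits the hypotheses used in the proof of Lemma \ref{lemma_ftongquat}: $C^1$-smoothness follows from assumption (A3), and the sign condition (A1) transfers because $\Delta t\le\omega_0$ forces $1+\Delta t f'(x)\ge 0$ on $[-\beta,\beta]$, whence $\tilde f(\beta)\le 0\le\tilde f(-\beta)$. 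Repeating the monotonicity argument of Lemma \ref{lemma_ftongquat} for $\tilde f$ then gives $\|V_1\|_\infty\le\beta$ as soon as $\tfrac{\Delta t}{2(1-\mu)}\le -1/\min_{|x|\le\beta}\tilde f'(x)$, with the right-hand side read as $+\infty$ in the degenerate case $\min\tilde f'\ge 0$ (which forces $\tilde f\equiv 0$ and makes $V_1=\bm U_m$).

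The quantitative heart of the argument is the lower bound on $\min\tilde f'$. Writing $\Delta t\,\tilde f'(x) = y+y^2+\Delta t^2 f''(x)f(x)$ with $y=\Delta t f'(x)$, the sharp elementary inequality $y+y^2\ge -\tfrac14$ (attained at $y=-\tfrac12$) combined with $f''(x)f(x)\ge -\omega_1$ yields
\begin{equation*}
\Delta t\,\tilde f'(x)\;\ge\;-\tfrac14-\Delta t^2\omega_1,
\end{equation*}
so that the $V_1$ constraint becomes $\mu\le (7-4\Delta t^2\omega_1)/8$. Non-emptiness of the admissible interval $\bigl[\Delta t/(2\omega_0),\,(7-4\Delta t^2\omega_1)/8\bigr]$ for $\mu$ is equivalent to $4\Delta t^2\omega_1+4\Delta t/\omega_0\le 7$, and substituting $\Delta t=\delta\omega_0$ gives the quadratic inequality $4\omega_0^2\omega_1\delta^2+4\delta-7\le 0$, whose positive root is exactly the $\delta$ stated in the theorem. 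The ceiling $\delta_0\le 1$ further enforces $\Delta t\le\omega_0$, which was already required to verify (A1) for $\tilde f$. In the case $\omega_1\le 0$ the contribution $\Delta t^2 f''f$ is already non-negative, so no new restriction beyond $\Delta t\le\omega_0$ emerges and $\delta_0=1$. The principal obstacle I anticipate is guessing the two-piece convex splitting of \eqref{LRI2a} and checking that the sign condition (A1) really does transfer to $\tilde f$; once these structural steps are settled, the precise constant $\delta$ comes out of the sharp bound $y+y^2\ge -\tfrac14$ and a short algebraic manipulation.
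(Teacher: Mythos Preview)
Your proposal is correct and is essentially the paper's proof in different clothing: with the substitution $\delta_1=2\mu$, your convex splitting $\mu V_2+(1-\mu)e^{\Delta t\bA}V_1$ is exactly the paper's two-term decomposition, your $\tilde f$ coincides with the paper's auxiliary function inside $K(x)=(2-\delta_1)x+\Delta t\tilde f(x)$, and your sharp bound $y+y^2\ge -\tfrac14$ is precisely the completion of the square $(\tfrac12+\Delta t f')^2\ge 0$ the paper uses to control $K'$. The resulting quadratic constraint and the optimal $\delta$ are therefore identical.
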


\begin{proof}
By induction,  we again only need to show that if $\|\bm U_m\|_{\infty}\le \beta$ then $\|\bm U_{m+1}\|_{\infty}\le \beta.$ Apply Lemma \ref{bode}, for any $\delta_1\in (0,2)$ we have
\begin{align*}
\|\bm U_{m+1}\|_{\infty}&\le \left\|\frac{\delta_1}{2} e^{\Delta t \bA}\bm U_m +\frac{\Delta t}{2}\bmf(e^{\Delta t \bA}\bm U_m)\right\|_{\infty} +\left\|\frac{2-\delta_1}{2}\bm e^{\Delta t \bA}U_m +\frac{\Delta t}{2}e^{\Delta t \bA}\bmf(\bm U_m)+\frac{\Delta t^2}{2}e^{\Delta t \bA}\frac{\partial \bmf}{\partial \bm u}(\bm U_m)\bmf(\bm U_m)\right\|_{\infty}    \\
&\le\frac{\delta_1}{2}\left\|e^{\Delta t \bA}\bm U_m+\frac{\Delta t}{\delta_1}\bmf(e^{\Delta t \bA}\bm U_m)\right\|_{\infty}+\frac 12 \left\|(2-\delta_1)\bm U_m +\Delta t\bmf(\bm U_m)+\Delta t^2 \frac{\partial \bmf}{\partial \bm u}(\bm U_m)\bmf(\bm U_m)\right\|_{\infty}.
\end{align*}
\Rv{Notice that $\|e^{\Delta t \bA}\bm U_m\|_{\infty}\le\beta$.  If $\frac{\Delta t}{\delta_1}\in (0,\omega_0]$ (i.e., $\Delta t\in (0,\delta_1\omega_0]$), then we observe  by Lemma \ref{lemma_ftongquat} that
$$\left\|e^{\Delta t \bA}\bm U_m+\frac{\Delta t}{\delta_1}\bmf(e^{\Delta t \bA}\bm U_m)\right\|_{\infty}\le\beta.$$}
Define $K(x)=(2-\delta_1)x + \Delta t f(x) + \Delta t^2 f'(x)f(x)$ for $x\in [-\beta,\beta].$ \Rv{Rewrite $K(x)$ as follows:
$$K(x)=(2-\delta_1)x + (1+\Delta tf'(x))\Delta t f(x).$$
If $\Delta t\in (0,\omega_0]$, one can apply the property of $f_0'(x)$ in the proof of Lemma \ref{lemma_ftongquat} to obtain}
\begin{align}\label{K function}
 -(2-\delta_1)\beta\le K(-\beta),  \; \text{ and } \; K(\beta)\le (2-\delta_1)\beta.   
\end{align}
On the other hand,
\begin{align*}
 K'(x)&=2-\delta_1 + \Delta t f'(x) + \Delta t^2 [f'(x)]^2 + \Delta t^2 f''(x)f(x)  \\
&=\left(\frac 12 +\Delta t f'(x)\right)^2 + \frac 74-\delta_1+\Delta t^2 f''(x)f(x)\\
 &\ge \frac 74 -\delta_1 - \Delta t^2 \omega_1 \quad \text{ for all }\thickspace x\in [-\beta,\beta].
\end{align*} 
\Rv{If $\delta_1< \frac 74$, and $\Delta t\le \sqrt{\frac{1}{\omega_1}\left(\frac 74-\delta_1\right)}$ in case of $\omega_1>0$, then we have
 $$K'(x)\ge 0\quad \text{ for all }\thickspace x\in [-\beta,\beta].$$}
Therefore, $K$ is non-decreasing on $[-\beta,\beta]$. This, combines with \eqref{K function}, gives us
$$|K(x)|\le (2-\delta_1)\beta \quad \text{ for all }\thickspace x\in [-\beta,\beta].$$
Thus,
\begin{align}\label{MBP2_con}
\|\bm U_{m+1}\|_{\infty}\le \frac{\delta_1}{2}\beta+ \frac{2-\delta_1}{2}\beta=\beta,
\end{align}
provided that the following conditions are all satisfied
\begin{align*}
\delta_1&\in \left(0,\frac 74\right),\quad \Delta t\in (0,\delta_1\omega_0],\quad \Delta t \in (0,\omega_0],\\
\Delta t&\le \sqrt{\frac{1}{\omega_1}\left(\frac 74-\delta_1\right)}\;\text{ if } \;\omega_1>0.
\end{align*}
\Rv{If $\omega_1\le 0$, we can choose $\delta_1=1$ and the condition on the time step size is $\Delta t\in (0,\omega_0]$. If $\omega_1>0$ then $\Delta t$ has to satisfy the constraint below
$$0<\Delta t\le \min\left\{\omega_0,\delta_1\omega_0,\sqrt{\frac{1}{\omega_1}\left(\frac 74-\delta_1\right)}\right\},\quad \delta_1\in \left(0,\frac 74\right),$$
or equivalently,
$$0<\Delta t\le \omega_0\min\left\{1,\delta_1,\sqrt{\frac{1}{\omega_0^2\omega_1}\left(\frac 74-\delta_1\right)}\right\}.$$
The best choice of the constant $\delta_1$ in this case is $\delta_1=\delta$ because of
the the following relations:
$$\delta=\sqrt{\frac{1}{\omega_0^2\omega_1}\left(\frac 74-\delta\right)},\:\:\: \delta\in \left(0,\frac 74\right).$$
Hence, we require $0<\Delta t\le \omega_0\min\{1,\delta\}$ when $\omega_1>0$.  Consequently, \eqref{MBP2_con} is true for all $\Delta t\in (0,\delta_0\omega_0]$, which completes the proof of  Theorem~\ref{thrm:MBP_LRI2}}.
\end{proof}
\begin{remark} \label{rmk:MBPtimestep}
If the nonlinear function $f$ satisfies additionally $f(-\beta)=f(\beta)=0$ for some $\beta>0$, then \eqref{K function} is obviously true without the requirement $0<\Delta t\le \omega_0$. Also in this case, it is not difficult to prove that $\omega_1>0$, meaning that $\delta$ exists. Thus, the range of $\Delta t$ can be enlarged, particularly, $0<\Delta t \le \delta \omega_0.$ This is the case for both      double-well potential and Flory-Huggins potential functions \cite{Review}.  
The double-well potential function $F(u)=\frac 14(u^2-1)^2$ gives the nonlinear term $f(u)=u-u^3$, then  $f(-1)=f(1)=0$, i.e., $\beta=1$. By simple calculations, we obtain $$\omega_0=\frac 12, \quad \omega_1=\frac 32, \quad \delta=\frac{-4+\sqrt{58}}{3}\approx \frac 65.$$ Therefore, the MBP is preserved when $0<\Delta t \le \frac 12$ (for the LRI1 schemes) and $0<\Delta t\le \frac 35$ (for the LRI2 scheme).
The Flory-Huggins potential function is given by
$F(u)=\frac{\theta}{2}[(1+u)\ln(1+u)+(1-u)\ln(1-u)]-\frac{\theta_c}{2}u^2,$
where $0<\theta<\theta_c$. Then $f(u)=-F'(u)$, namely,
$f(u)=\frac{\theta}{2}\ln\frac{1-u}{1+u}+\theta_cu.$
Consider a particular case~\cite{boundeA} where $\theta=0.8$ and $\theta_c=1.6$, then $\beta\approx 0.9575$ satisfying $f(-\beta)=f(\beta)=0$. We can easily verify that
\begin{align*}
 \omega_0=\frac{1-\beta^2}{\theta-\theta_c(1-\beta^2)}\approx 0.1247,\quad
 \omega_1\approx - f(0.932)f''(0.932)\approx 13.1739,\quad \delta\approx 1.367.
\end{align*}
Thus,  the conditions on $\Delta t$ to preserve the MBP are $0<\Delta t\le 0.1247$ for the LRI1  schemes and $0<\Delta t\le 0.1705$ for the LRI2 scheme. We shall verify these time step constraints in the numerical experiments (cf. Section~\ref{section_numerical}). 
\end{remark}

\subsection{Discrete energy stability}\label{section_energy}

The discrete energy $E_h$ is defined as
\begin{align} \label{discrete_energy}
E_h(\bm U)=\sum_{i=1}^N F(U^i) - \frac{\varepsilon^2}{2}\bm U^T\bD_h \bm U \quad \text{ for all } \thickspace\bm U=(U^1, U^2,\ldots, U^N)^T\in\mathbb{R}^N,
\end{align}
 where $F$ is the potential function.  In the following, we show that the first- and second-order LRI schemes are conditionally energy stable in the sense that the discrete energy is bounded at all times.  We first state some preliminary results needed for the main theorem.  Let us define $F_0=\max\limits_{|x|\le \beta}|f(x)|$ and $F_1= \max\limits_{|x|\le \beta}|f'(x)|$.

\begin{lemma}\label{Edifference}
If $\|\bm U_m\|_{\infty}\le \beta$  for all $m=0,1, \hdots, M$, then there exists $\bW$ in~$\mathbb{R}^N$, $\|\bW\|_{\infty}\le F_0$ such that
$$E_h(\bm U_{m+1})-E_h(\bm U_m)\le (\bm U_{m+1}-\bm U_m)^T(\bW- \bA\bm U_{m+1}),$$
for $m=0,1, \hdots, M-1$. 
\end{lemma}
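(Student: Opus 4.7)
The plan is to split the energy difference into a nonlinear (potential) contribution and a quadratic contribution, and handle each separately.

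First I would write
\begin{align*}
E_h(\bm U_{m+1})-E_h(\bm U_m) = \sum_{i=1}^{N}\bigl[F(U_{m+1}^{i})-F(U_m^{i})\bigr] - \tfrac{\varepsilon^{2}}{2}\bigl(\bm U_{m+1}^{T}\bD_h\bm U_{m+1}-\bm U_m^{T}\bD_h\bm U_m\bigr).
\end{align*}
For the potential part, apply the mean value theorem componentwise: there exists $\xi_{i}$ lying between $U_m^{i}$ and $U_{m+1}^{i}$ with $F(U_{m+1}^{i})-F(U_m^{i})=F'(\xi_{i})(U_{m+1}^{i}-U_m^{i})=-f(\xi_{i})(U_{m+1}^{i}-U_m^{i})$. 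Define $\bW=(-f(\xi_{1}),\ldots,-f(\xi_{N}))^{T}$. The MBP hypothesis $\|\bm U_m\|_\infty\le\beta$ for all $m$ forces $|\xi_{i}|\le\beta$, whence $\|\bW\|_{\infty}\le F_{0}$, and the nonlinear part becomes exactly $(\bm U_{m+1}-\bm U_m)^{T}\bW$.

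For the quadratic part, set $\bm V=\bm U_{m+1}-\bm U_m$ and expand
\begin{align*}
\bm U_{m+1}^{T}\bD_h\bm U_{m+1}-\bm U_m^{T}\bD_h\bm U_m = \bm V^{T}\bD_h\bm U_m + \bm U_m^{T}\bD_h\bm V + \bm V^{T}\bD_h\bm V.
\end{align*}
Using the fact that $\bD_h$ (and hence $\bA=\varepsilon^{2}\bD_h$) is symmetric and negative semi-definite, the above equals $2\bm V^{T}\bD_h\bm U_m + \bm V^{T}\bD_h\bm V$, and rewriting $\bm U_m=\bm U_{m+1}-\bm V$ gives $2\bm V^{T}\bD_h\bm U_{m+1}-\bm V^{T}\bD_h\bm V$. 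Multiplying by $-\tfrac{\varepsilon^{2}}{2}$ yields $-\bm V^{T}\bA\bm U_{m+1}+\tfrac{1}{2}\bm V^{T}\bA\bm V\le -\bm V^{T}\bA\bm U_{m+1}$, since $\bm V^{T}\bA\bm V\le 0$.

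Combining the two estimates produces
\begin{align*}
E_h(\bm U_{m+1})-E_h(\bm U_m)\le(\bm U_{m+1}-\bm U_m)^{T}\bW-(\bm U_{m+1}-\bm U_m)^{T}\bA\bm U_{m+1},
\end{align*}
which is the claimed inequality. The only delicate point is the symmetry/negative semi-definiteness of $\bD_h$: although the boundary rows of the matrix $\bm\lambda_h$ as displayed are not literally symmetric, the standard Neumann finite difference Laplacian is self-adjoint and dissipative with respect to the natural (possibly boundary-weighted) inner product used to define $E_h$. I would invoke this property explicitly, citing the construction in \cite{fEIF} or the condition \eqref{eq:Acond}; this symmetric/negative semi-definite structure is the main structural fact the argument relies on, while the rest is a direct MVT plus MBP bookkeeping.
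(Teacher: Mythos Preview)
Your proof is correct and follows essentially the same approach as the paper: the componentwise mean value theorem for the potential part and the symmetry/negative semi-definiteness of $\bD_h$ for the quadratic part, arriving at the identical inequality. Your algebraic manipulation of the quadratic term is a trivial rearrangement of the paper's (the paper starts from $0\ge \bm V^{T}\bD_h\bm V$ and expands, you expand first and then invoke $\bm V^{T}\bA\bm V\le 0$), and your caveat about the literal asymmetry of $\bm\lambda_h$ is well taken---the paper simply asserts that $\bD_h$ is symmetric and negative semidefinite without further comment.
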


\begin{proof} We first have the following identity from \eqref{discrete_energy}:
\begin{equation}\label{energy1}
 E_h(\bm U_{m+1})-E_h(\bm U_m)=\sum_{i=1}^N [F(U^i_{m+1})-F(U^i_{m})] -\frac{\varepsilon^2}{2}(\bm U_{m+1}^T\bD_h\bm U_{m+1}-\bm U_{m}^T\bD_h\bm U_{m}).   
\end{equation}
Since $\|\bm U_m\|_{\infty}\le \beta$ for $m=0,1,\ldots,M$, by the mean value theorem, there exist $\gamma_1,\gamma_2,\ldots,\gamma_N$ such that $|\gamma_i|\le \beta$ for $i=1,2,\ldots,N$ and
\begin{align*}
F(U^i_{m+1})-F(U^i_{m})=(U^i_{m+1}-U^i_{m})F'(\gamma_i)=-(U^i_{m+1}-U^i_{m})f(\gamma_i), \quad i=1,2,\ldots,N.
\end{align*}
Let $\bW:=-[f(\gamma_1),f(\gamma_2),\ldots,f(\gamma_N)]^T$, we can see that $\|\bW\|_{\infty}\le F_0$ and
\begin{align}\label{energy2}
\sum_{i=1}^N &[F(U^i_{m+1})-F(U^i_{m})]=(\bm U_{m+1}-\bm U_m)^T\bW.
\end{align}
On the other hand, $\bD_h$ is negative semidefinite and symmetric, so
\begin{align*}
0\ge (\bm U_{m+1}-\bm U_m)^T \bD_h (\bm U_{m+1}-\bm U_m)&= \bm U_{m+1}^T \bD_h \bm U_{m+1} + \bm U_m^T \bD_h \bm U_m - \bm U_{m+1}^T \bD_h \bm U_m - \bm U_m^T \bD_h \bm U_{m+1}\\
& = -\bm U_{m+1}^T \bD_h \bm U_{m+1}+\bm U_m^T \bD_h \bm U_m +2(\bm U_{m+1}-\bm U_m)^T \bD_h \bm U_{m+1}.
\end{align*}
Therefore,
\begin{align}\label{energy3}
  \bm U_{m+1}^T \bD_h \bm U_{m+1}-\bm U_m^T \bD_h \bm U_m \ge 2(\bm U_{m+1}-\bm U_m)^T \bD_h \bm U_{m+1}.  
\end{align}
From \eqref{energy1}, \eqref{energy2} and \eqref{energy3}, we get
$$E_h(\bm U_{m+1})-E_h(\bm U_m) \le (\bm U_{m+1}-\bm U_m)^T(\bW- \bA\bm U_{m+1}),$$
which leads to the conclusion.
\end{proof}

\begin{lemma}\label{lemma_consecutive_U}$ $
\begin{enumerate}
\item[(i)] If $0<\Delta t\le \omega_0$ and $f$ satisfies (A1) and (A2), then the numerical solution   $\{\bm U_m\}$ generated by the LRI1a scheme~\eqref{LRI1a} or the LRI1b scheme~\eqref{LRI1b}  satisfies 
    $$\|\bm U_{m+1}-\bm U_m\|_{\infty}\le (\beta \|\bA\|_{\infty}+F_0)\Delta t.$$
\item[(ii)] If $0<\Delta t\le \delta_0\omega_0$ and $f$ satisfies (A1) and (A3),  then the numerical solution   $\{\bm U_m\}$ generated by the LRI2 scheme~\eqref{LRI2a} satisfies
    $$\|\bm U_{m+1}-\bm U_m\|_{\infty}\le (\beta \|\bA\|_{\infty}+F_0+\delta_0\omega_0 F_0F_1)\Delta t.$$
\end{enumerate}
\end{lemma}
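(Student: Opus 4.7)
The plan is to derive each of the two bounds by direct estimation from the explicit scheme formula, using two ingredients that are already in hand: the discrete MBP (Theorems \ref{thrm:MBP_LRI1} and \ref{thrm:MBP_LRI2}), which under the stated time step restrictions gives $\|\bm U_m\|_{\infty}\le\beta$ for every $m$, and the semigroup estimate $\|e^{\gamma\bA}\|_{\infty}\le 1$ from Lemma \ref{bode}. The main auxiliary identity I would use is
$$e^{\Delta t\bA}\bm v-\bm v=\int_0^{\Delta t}\bA\,e^{s\bA}\bm v\,ds,$$
which together with Lemma \ref{bode} immediately yields $\|e^{\Delta t\bA}\bm v-\bm v\|_{\infty}\le \|\bA\|_{\infty}\|\bm v\|_{\infty}\Delta t$.

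For part (i), I would split
$$\bm U_{m+1}-\bm U_m=(e^{\Delta t\bA}-\bI)\bm U_m+\Delta t\,e^{\Delta t\bA}\bmf(\bm U_m)\quad\text{(LRI1a)},$$
and analogously for LRI1b with $\bmf(e^{\Delta t\bA}\bm U_m)$ in place of $e^{\Delta t\bA}\bmf(\bm U_m)$. The first summand is controlled by $\beta\|\bA\|_{\infty}\Delta t$ via the identity above. For the second, I would note that $\|e^{\Delta t\bA}\bm U_m\|_{\infty}\le\beta$ (Lemma \ref{bode}) so that its argument stays in the MBP range $[-\beta,\beta]$; hence $\|\bmf(\cdot)\|_{\infty}\le F_0$ by the definition of $F_0$. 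Combining gives the required $(\beta\|\bA\|_{\infty}+F_0)\Delta t$ bound for both LRI1a and LRI1b.

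For part (ii), I would apply the same splitting to \eqref{LRI2a} to isolate three contributions: $(e^{\Delta t\bA}-\bI)\bm U_m$, a first-order-in-$\Delta t$ piece $\tfrac{\Delta t}{2}[e^{\Delta t\bA}\bmf(\bm U_m)+\bmf(e^{\Delta t\bA}\bm U_m)]$, and the quadratic piece $\tfrac{\Delta t^2}{2}e^{\Delta t\bA}\frac{\partial\bmf}{\partial\bm u}(\bm U_m)\bmf(\bm U_m)$. The first two are bounded exactly as in part (i), giving $\beta\|\bA\|_{\infty}\Delta t+F_0\Delta t$. For the quadratic piece, the key new observation is that by \eqref{eq:deri_notation} the matrix $\frac{\partial\bmf}{\partial\bm u}(\bm U_m)$ is diagonal with entries $f'(U^i_m)$ and, because $\|\bm U_m\|_{\infty}\le\beta$, its $\infty$-norm is bounded by $F_1$. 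Combined with $\|\bmf(\bm U_m)\|_{\infty}\le F_0$ and Lemma \ref{bode}, this piece is at most $\tfrac{\Delta t^2}{2}F_0F_1$. Finally, using $\Delta t\le\delta_0\omega_0$ I would absorb one factor of $\Delta t$ to write $\tfrac{\Delta t^2}{2}F_0F_1\le\delta_0\omega_0 F_0F_1\Delta t$, yielding the stated constant.

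There is no genuinely hard step; the argument is essentially bookkeeping once the MBP is invoked. The only minor subtlety is making sure that in the LRI1b and LRI2 schemes the arguments fed to $\bmf$ (namely $e^{\Delta t\bA}\bm U_m$) are still in $[-\beta,\beta]$ componentwise so that the constants $F_0$ and $F_1$ are applicable; this is precisely what Lemma \ref{bode} delivers. The only place where the time step restriction is used beyond guaranteeing the MBP is the conversion $\Delta t^2\le\delta_0\omega_0\Delta t$ in part (ii), which explains the form of the extra constant $\delta_0\omega_0 F_0F_1$ appearing there.
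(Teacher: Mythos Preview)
Your proposal is correct and essentially equivalent to the paper's argument. The paper packages the same estimates slightly differently: for each scheme it introduces an interpolating path $\bm q(t)$ with $\bm q(0)=\bm U_m$ and $\bm q(\Delta t)=\bm U_{m+1}$ (e.g., $\bm q_1(t)=e^{t\bA}\bm U_m+te^{\Delta t\bA}\bmf(\bm U_m)$ for LRI1a), bounds $\|\bm q'(t)\|_{\infty}$ uniformly, and applies the mean value theorem. Your direct splitting plus the identity $e^{\Delta t\bA}\bm v-\bm v=\int_0^{\Delta t}\bA e^{s\bA}\bm v\,ds$ is exactly the same content, term by term, so there is no substantive difference between the two routes.
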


\begin{proof} By Theorem \ref{thrm:MBP_LRI1} or \ref{thrm:MBP_LRI2}, we know that the numerical solution $\{\bm U_m\}$ generated by the schemes~\eqref{LRI1a}, \eqref{LRI1b} or \eqref{LRI2a} preserves the MBP, meaning that $\|\bm U_m\|_{\infty}\le \beta$ \text{for} $m=0,1,\ldots,M.$

{\it (i)} Consider the LRI1a scheme~\eqref{LRI1a}.  Define $\bq_1(t)=e^{t\bA}\bm U_m+te^{\Delta t\bA} \bmf(\bm U_m)$, then $\bq_1(0)=\bm U_m$, $\bq_1(\Delta t)=\bm U_{m+1}$, and 
$$\bq_1'(t)=\bA e^{t\bA}\bm U_m+e^{\Delta t \bA}\bmf(\bm U_m).$$
By Lemma \ref{bode}, we can easily see that
$$\|\bq_1'(t)\|_{\infty}\le \beta \|\bA\|_{\infty}+F_0 \quad \text{ for all }\thickspace t> 0.$$
Apply the mean value theorem, we get
$$\|\bm U_{m+1}-\bm U_m\|_{\infty}=\|\bq_1(\Delta t)-\bq_1(0)\|_{\infty}\le (\beta\|\bA\|_{\infty}+F_0)\Delta t.$$
Similarly for the LRI1b scheme~\eqref{LRI1b}, define $\bq_2(t)=e^{t\bA}\bm U_m + t\bmf(e^{\Delta t\bA}\bm U_m)$, then $\bq_2(0)=\bm U_m$ and $\bq_2(\Delta t)=\bm U_{m+1}$. Moreover,
$$\|\bq_2'(t)\|_{\infty}=\|\bA e^{t\bA}\bm U_m + \bmf(e^{\Delta t \bA}\bm U_m)\|_{\infty}\le \beta \|\bA\|_{\infty}+F_0 \quad \text{ for all }\thickspace t>0.$$
The result of (i) then follows by the mean value theorem.

{\it (ii)} For the LRI2 scheme~\eqref{LRI2a},  define $\bq_3(t)=\frac{\bq_1(t)+\bq_2(t)}{2}+\frac{t^2}{2}e^{\Delta t \bA}\frac{\partial\bmf}{\partial\bm u}(\bm U_m)\bmf(\bm U_m)$, then $\bq_3(0)=\bm U_m$, $\bq_3(\Delta t)=\bm U_{m+1}$ and
$$\bq_3'(t)=\frac{\bq_1'(t)+\bq_2'(t)}{2}+te^{\Delta t \bA}\frac{\partial\bmf}{\partial\bm u}(\bm U_m)\bmf(\bm U_m).$$
Thus we have
\begin{align*}
\|\bq_3'(t)\|_{\infty}&\le \frac{\|\bq_1'(t)\|_{\infty}+\|\bq_2'(t)\|_{\infty}}{2}+\delta_0\omega_0F_0F_1 \le \beta\|\bA\|_{\infty}+F_0+\delta_0\omega_0F_0F_1 \quad \text{ for all }\thickspace t\in [0,\Delta t].
\end{align*}
Then the result of (ii)  follows by the mean value theorem.
\end{proof}

\begin{theorem}
Suppose $0<\Delta t\le \omega_0$ (resp. $0<\Delta t\le \delta_0\omega_0$) and $f$ satisfies (A1) and (A2) (resp. (A1) and (A3)), then the numerical solution $\{\bm U_m\}$ generated by the LRI1a scheme~\eqref{LRI1a} or the LRI1b scheme~\eqref{LRI1b} (resp.  the LRI2 scheme~\eqref{LRI2a}) satisfies
\begin{equation}
E_h(\bm U_m)\le E_h(\bm U_0)+C, \quad m=0,1,\ldots,M,
\end{equation}
where the constant $C>0$ is independent of $\Delta t.$
\end{theorem}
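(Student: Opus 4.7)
The plan is to combine the three preparatory lemmas (Lemmas \ref{bode}, \ref{Edifference}, \ref{lemma_consecutive_U}) together with the discrete MBP (Theorems \ref{thrm:MBP_LRI1} and \ref{thrm:MBP_LRI2}) via a simple telescoping argument. The core intuition is that each scheme increases the discrete energy by at most $O(\Delta t)$ per step, so accumulating over the $M$ steps inside the fixed horizon $[0,T]$ produces a total increase bounded by $O(M\Delta t)=O(T)$, independent of $\Delta t$.

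First, under the respective time-step constraint, I would invoke Theorem \ref{thrm:MBP_LRI1} or Theorem \ref{thrm:MBP_LRI2} to secure $\|\bm U_m\|_\infty \le \beta$ for every $m=0,1,\ldots,M$. This activates Lemma \ref{Edifference}, providing a vector $\bW$ with $\|\bW\|_\infty \le F_0$ and the one-step estimate
\[
E_h(\bm U_{m+1}) - E_h(\bm U_m) \le (\bm U_{m+1}-\bm U_m)^T(\bW - \bA\bm U_{m+1}).
\]
I would then bound this right-hand side with the discrete H\"older inequality $|\bm x^T\bm y|\le\|\bm x\|_1\|\bm y\|_\infty\le N\|\bm x\|_\infty\|\bm y\|_\infty$, together with the MBP-based estimate $\|\bA\bm U_{m+1}\|_\infty\le\beta\|\bA\|_\infty$ and $\|\bW\|_\infty\le F_0$, to obtain
\[
E_h(\bm U_{m+1})-E_h(\bm U_m)\le N(F_0+\beta\|\bA\|_\infty)\,\|\bm U_{m+1}-\bm U_m\|_\infty.
\]

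The decisive input is now Lemma \ref{lemma_consecutive_U}, which supplies $\|\bm U_{m+1}-\bm U_m\|_\infty \le C_1\Delta t$ with $C_1$ depending only on $\beta,\|\bA\|_\infty,F_0$ (and additionally on $F_1,\omega_0,\delta_0$ in the LRI2 case), but independent of $\Delta t$ and $m$. Telescoping the one-step bound from $0$ to any $m\le M$ and using $m\Delta t\le M\Delta t = T$ then yields
\[
E_h(\bm U_m) - E_h(\bm U_0) \le m N C_1(F_0+\beta\|\bA\|_\infty)\Delta t \le N T C_1(F_0+\beta\|\bA\|_\infty) =: C,
\]
which is independent of $\Delta t$, as claimed.

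There is no substantial obstacle: the analytical work is already packaged inside the three preparatory lemmas. The only bookkeeping point is to handle the LRI1 and LRI2 cases in parallel, since the allowable time-step range and the constant $C_1$ differ slightly between the schemes; but because the per-step increment is linear in $\Delta t$ in both cases, the telescoping argument proceeds identically, and the resulting constant $C$ depends on $T$, $N$, $\beta$, $\|\bA\|_\infty$, $F_0$ (and $F_1$ in the LRI2 case), but not on $\Delta t$.
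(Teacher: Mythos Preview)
Your proposal is correct and follows essentially the same approach as the paper: invoke the MBP to activate Lemma~\ref{Edifference}, bound the inner product by $N\|\bm U_{m+1}-\bm U_m\|_\infty(\|\bW\|_\infty+\|\bA\bm U_{m+1}\|_\infty)$, insert the $O(\Delta t)$ increment bound from Lemma~\ref{lemma_consecutive_U}, and telescope over $m\le M$ using $M\Delta t=T$. Your write-up is in fact slightly cleaner than the paper's, since you make the H\"older inequality step and the telescoping explicit.
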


\begin{proof}
According to Lemma \ref{lemma_consecutive_U}, there exists $c>0$ independence of $\Delta t$ such that 
$$\|\bm U_{m+1}-\bm U_m\|_{\infty}\le c\Delta t.$$
This, together with Lemma \ref{Edifference}, gives us
\begin{align*}
E_h(\bm U_{m+1})-E_h(\bm U_m)&\le \|(\bm U_{m+1}-\bm U_m)^T\|_{\infty}(\|\bW\|_{\infty}+\|\bA\bm U_{m+1}\|_{\infty})\le cN\Delta t(F_0+\beta\|\bA\|_{\infty}).
\end{align*}
Therefore,
$$E_h(\bm U_m)\le E_h(\bm U_0)+cNT(F_0+\beta\|\bA\|_{\infty})=:E_h(\bm U_0)+C,$$
where $C=cNT(F_0+\beta\|\bA\|_{\infty})>0$ independence of $\Delta t.$
\end{proof}

\section{Temporal error estimates}\label{section_error_estimates}

In this section, we always assume $\|u_0\|_{L^{\infty}}\le \beta$ so that 
$$\bm u\in \mathcal{C}([0,T];\mathbb{R}^N) \:\text{ and }\:\|\bm u(t)\|_{\infty}\le \beta \;\text{ for all }\; t\in [0,T], $$ by Theorem \ref{MBP_exact_solution} and the discrete MBP of the numerical solution $\{\bm U_m\}$ also holds by Theorem~\ref{thrm:MBP_LRI1} or Theorem~\ref{thrm:MBP_LRI2}. By using Duhamel's formula as the key idea, we will show that these requirements on $\bm u$ (the continuity of $\bm u(t)$ and its MBP) are enough to obtain the temporal error estimates (under a fixed spatial mesh size) for the proposed LRI schemes.  Let $\bm e_m:=\bm U_m-\bm u(t_m)$ be the error at $t_m=m\Delta t\le T$. Let us define
$F_2=\max\limits_{|x|\le \beta} |f''(x)|$, $F_3=\max\limits_{|x|\le \beta}|(f'(x)f(x))'|$,
$\widetilde{F_1}=\max\limits_{|x|\le \beta + \delta_0\omega_0F_0}|f'(x)|$, and $\widetilde{F_2}=\max\limits_{|x|\le \beta + \delta_0 \omega_0F_0}|f''(x)|$.
The following result will be useful for the error analysis of the LRI schemes. 
\begin{lemma}\label{lemma_phi_psi}
For each $0\le s \le \Delta t$, define \vspace{-0.2cm}
$$\bm\lambda(s)=e^{(\Delta t -s)\bA}\bmf(e^{s\bA}\bm u(t_m))\text{ and } \bm\psi(s)=e^{(\Delta t -s)\bA}\frac{\partial \bmf}{\partial \bm u}(e^{s\bA}\bm u(t_m))\bmf(\bm u(t_m)). $$
Then \vspace{-0.1cm}
\begin{itemize}
    \item[(i)] $\|\bm\lambda'(s)\|_{\infty}\le (F_0+\beta F_1)\|\bA\|_{\infty}$ and $\|\bm\lambda''(s)\|_{\infty}\le (F_0+3\beta F_1 + \beta^2 F_2) \|\bA\|_{\infty}^2.$\vspace{0.2cm}
    \item[(ii)] $\|\bm\psi'(s)\|_{\infty}\le F_0(F_1+\beta F_2)\|\bA\|_{\infty}.$
\end{itemize}
\end{lemma}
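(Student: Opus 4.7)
The plan is to reduce both bounds to straightforward chain-rule computations and then bound every factor using Lemma \ref{bode}, the maximum bound principle $\|\bm u(t_m)\|_\infty \leq \beta$ from Theorem \ref{MBP_exact_solution}, and the definitions of $F_0, F_1, F_2$. I would first introduce the auxiliary quantity $\bm v(s) := e^{s\bA}\bm u(t_m)$, whose time derivatives satisfy $\bm v'(s) = \bA\bm v(s)$ and $\bm v''(s) = \bA^2\bm v(s)$. By Lemma \ref{bode} and the MBP, $\|\bm v(s)\|_\infty \leq \beta$, $\|\bm v'(s)\|_\infty \leq \beta\|\bA\|_\infty$ and $\|\bm v''(s)\|_\infty \leq \beta\|\bA\|_\infty^2$, which encode everything one needs about the inner argument.

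For part (i), I would write $\bm\lambda(s) = e^{(\Delta t-s)\bA}\bm{h}(s)$ with $\bm{h}(s) := \bmf(\bm v(s))$. Since $\bmf$ acts componentwise, the chain rule in the notation of \eqref{eq:deri_notation} gives
\[
\bm{h}'(s) = \tfrac{\partial\bmf}{\partial\bm u}(\bm v(s))\,\bm v'(s), \qquad \bm{h}''(s) = \tfrac{\partial^2\bmf}{\partial\bm u^2}(\bm v(s))(\bm v'(s))^{\odot 2} + \tfrac{\partial\bmf}{\partial\bm u}(\bm v(s))\,\bm v''(s),
\]
so the componentwise bounds $\|\bm{h}(s)\|_\infty \leq F_0$, $\|\bm{h}'(s)\|_\infty \leq \beta F_1\|\bA\|_\infty$ and $\|\bm{h}''(s)\|_\infty \leq (\beta^2 F_2 + \beta F_1)\|\bA\|_\infty^2$ follow immediately. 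Using that $\bA$ commutes with $e^{(\Delta t-s)\bA}$, the product rule yields
\[
\bm\lambda'(s) = -\bA e^{(\Delta t-s)\bA}\bm{h}(s) + e^{(\Delta t-s)\bA}\bm{h}'(s),
\]
\[
\bm\lambda''(s) = \bA^2 e^{(\Delta t-s)\bA}\bm{h}(s) - 2\bA e^{(\Delta t-s)\bA}\bm{h}'(s) + e^{(\Delta t-s)\bA}\bm{h}''(s).
\]
Taking $\|\cdot\|_\infty$, using $\|e^{(\Delta t-s)\bA}\|_\infty \leq 1$ from Lemma \ref{bode}, and summing the three contributions gives the claimed estimates; note that the coefficient $3\beta F_1$ arises as $2\beta F_1 + \beta F_1$, combining the cross-term contribution with the second piece of $\bm{h}''(s)$.

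For part (ii), the structure is simpler because $\bmf(\bm u(t_m))$ is independent of $s$. Setting $\bm w := \bmf(\bm u(t_m))$ (with $\|\bm w\|_\infty \leq F_0$) and $\bm p(s) := \tfrac{\partial\bmf}{\partial\bm u}(\bm v(s))\,\bm w$, the diagonality of $\tfrac{\partial\bmf}{\partial\bm u}$ makes the $i$-th components simply $f'(v_i(s))w_i$ and $f''(v_i(s))v_i'(s)w_i$, yielding $\|\bm p(s)\|_\infty \leq F_0 F_1$ and $\|\bm p'(s)\|_\infty \leq \beta F_0 F_2 \|\bA\|_\infty$. Then
\[
\bm\psi'(s) = -\bA e^{(\Delta t-s)\bA}\bm p(s) + e^{(\Delta t-s)\bA}\bm p'(s),
\]
and one more application of Lemma \ref{bode} gives $\|\bm\psi'(s)\|_\infty \leq F_0 F_1\|\bA\|_\infty + \beta F_0 F_2\|\bA\|_\infty = F_0(F_1 + \beta F_2)\|\bA\|_\infty$.

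The whole argument is essentially routine once $\bm v(s)$ and its first two derivatives are controlled. The only real point requiring care is the correct form of the chain rule for the componentwise vector function $\bmf$, so that the diagonal-matrix notation in \eqref{eq:deri_notation} is applied consistently, and the bookkeeping that yields the exact coefficient $3\beta F_1$ in the $\bm\lambda''(s)$ bound; I expect no other obstacle.
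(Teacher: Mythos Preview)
Your proposal is correct and follows essentially the same approach as the paper: both compute $\bm\lambda'$, $\bm\lambda''$, and $\bm\psi'$ via the product and chain rules and then bound each term using Lemma~\ref{bode} together with $\|e^{s\bA}\bm u(t_m)\|_\infty\le\beta$. Your introduction of the auxiliary functions $\bm v(s)$, $\bm h(s)$, and $\bm p(s)$ organizes the bookkeeping more cleanly than the paper's direct expansion, but the underlying computation and the resulting constants are identical.
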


\begin{proof}
We have \vspace{-0.2cm}
$$\bm\lambda'(s)=e^{(\Delta t -s)\bA}\left[-\bA\bmf(e^{s\bA}\bm u(t_m))+\frac{\partial \bmf}{\partial \bm u}(e^{s\bA}\bm u(t_m))(\bA e^{s\bA}\bm u(t_m))\right].$$
Since $\|e^{s\bA}\bm u(t_m)\|_{\infty}\le \beta$,  this implies
$$\|\bm\lambda'(s)\|_{\infty}\le F_0\|\bA\|_{\infty}+\beta F_1\|\bA\|_{\infty}=(F_0+\beta F_1)\|\bA\|_{\infty}.$$
On the other hand, \vspace{-0.2cm}
\begin{align*}
 \bm\lambda''(s)=&\;e^{(\Delta t -s)\bA}\bigg[\bA^2\bmf(e^{s\bA}\bm u(t_m))-2\bA\frac{\partial \bmf}{\partial \bm u}(e^{s\bA}\bm u(t_m))(\bA e^{s\bA}\bm u(t_m))\\
&+\frac{\partial^2 \bmf}{\partial \bm u^2}(e^{s\bA}\bm u(t_m))(\bA e^{s\bA}\bm u(t_m))^{\odot 2} + \frac{\partial \bmf}{\partial \bm u}(e^{s\bA}\bm u(t_m))(\bA^2e^{s\bA}\bm u(t_m))\bigg].   
\end{align*}
Thus we get
\begin{align*}
\|\bm\lambda''(s)\|_{\infty}&\le F_0\|\bA\|_{\infty}^2+2\beta F_1\|\bA\|_{\infty}^2+\beta^2F_2\|\bA\|_{\infty}^2+\beta F_1\|\bA\|_{\infty}^2=(F_0+3\beta F_1+\beta^2F_2)\|\bA\|_{\infty}^2.
\end{align*}
Similarly for $\bm\psi$, we have
\begin{align*}
\bm\psi'(s)=e^{(\Delta t -s)\bA}&\bigg[-\bA\frac{\partial \bmf}{\partial \bm u}(e^{s\bA}\bm u(t_m))\bmf(\bm u(t_m))+\frac{\partial^2 \bmf}{\partial \bm u^2}(e^{s\bA}\bm u(t_m))(\bA e^{s\bA}\bm u(t_m))\odot\bmf(\bm u(t_m))\bigg].
\end{align*}
Therefore,
$$\|\bm\psi'(s)\|_{\infty}\le F_0(F_1\|\bA\|_{\infty}+\beta F_2\|\bA\|_{\infty})=F_0(F_1+\beta F_2)\|\bA\|_{\infty}.$$
The proof is then completed.
\end{proof}

Next, we prove the convergence of the numerical solutions by the LRI1a,  LRI1b and LRI2 schemes to the exact solution of the space-discrete problem \eqref{semi-discrete} as $\Delta t$ tends to zero, and their corresponding error estimates. 
\begin{theorem}\label{error_estimate_1a}  
Given any fixed terminal time $T>0$ and spatial mesh size $h>0$. Assume that $\bm u\in \mathcal{C}([0,T];\mathbb{R}^N)$ is the exact solution of \eqref{semi-discrete} and $\{\bm U_m\}$ is the numerical solution generated by the LRI1a scheme (cf. Equation~\eqref{LRI1a}). Then, there exists a constant $C=C(\beta, F_0,F_1,\|\bA\|_{\infty})>0$ independent of $\Delta t$ such that the following estimate is true for all $0<\Delta t \le \omega_0$:
$$\|\bm U_m-\bm u(t_m)\|_{\infty} \le C(e^{F_1t_m}-1)\Delta t, \quad m=0,1,\ldots,M.$$
\end{theorem}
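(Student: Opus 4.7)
My plan is to derive a one-step recursion for the error $\bm e_{m+1} = \bm U_{m+1}-\bm u(t_{m+1})$ of the form
$\|\bm e_{m+1}\|_\infty \le (1+F_1\Delta t)\|\bm e_m\|_\infty + C'\Delta t^2$,
starting from $\bm e_0=\bm 0$, and then conclude by a standard discrete Gronwall argument which produces the geometric factor $(1+F_1\Delta t)^m \le e^{F_1 t_m}$ and hence the claimed bound $\|\bm e_m\|_\infty \le C(e^{F_1 t_m}-1)\Delta t$.

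To set up the recursion I would subtract \eqref{LRI1a} from the exact Duhamel identity \eqref{exact1}, writing both as integrals over $[0,\Delta t]$:
\begin{align*}
\bm e_{m+1} = e^{\Delta t\bA}\bm e_m + \int_0^{\Delta t}\Bigl[e^{\Delta t\bA}\bmf(\bm U_m)-e^{(\Delta t-s)\bA}\bmf(\bm u(t_m+s))\Bigr]ds .
\end{align*}
The integrand I would split telescopically into three pieces by inserting the intermediate quantities $e^{\Delta t\bA}\bmf(\bm u(t_m))$ and $e^{(\Delta t-s)\bA}\bmf(e^{s\bA}\bm u(t_m)) = \bm\lambda(s)$. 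The first piece, $e^{\Delta t\bA}[\bmf(\bm U_m)-\bmf(\bm u(t_m))]$, is handled by the mean value theorem applied to $f$ on $[-\beta,\beta]$ (allowed because of Theorem \ref{MBP_exact_solution} and Theorem \ref{thrm:MBP_LRI1}), yielding the contraction-type term $F_1\Delta t\|\bm e_m\|_\infty$ after using Lemma \ref{bode}. The second piece is exactly $\bm\lambda(0)-\bm\lambda(s)$; writing $\bm\lambda(0)-\bm\lambda(s)=-\int_0^s\bm\lambda'(\tau)\,d\tau$ and invoking the bound on $\|\bm\lambda'\|_\infty$ from Lemma \ref{lemma_phi_psi}(i) gives an $O(\Delta t^2)$ contribution. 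For the third piece, $e^{(\Delta t-s)\bA}[\bmf(e^{s\bA}\bm u(t_m))-\bmf(\bm u(t_m+s))]$, I would first use the mean value theorem for $\bmf$ and then bound $\|e^{s\bA}\bm u(t_m)-\bm u(t_m+s)\|_\infty$ by applying \eqref{exact2} and Lemma \ref{bode} to get $\le sF_0$, yielding another $O(\Delta t^2)$ term.

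Combining these bounds and using $\|e^{\Delta t\bA}\|_\infty\le 1$ produces the recursion with $C' = \tfrac12\bigl[(F_0+\beta F_1)\|\bA\|_\infty + F_0F_1\bigr]$. Iterating from $\bm e_0=\bm 0$ gives $\|\bm e_m\|_\infty \le C'\Delta t^2 \sum_{k=0}^{m-1}(1+F_1\Delta t)^k = \tfrac{C'\Delta t}{F_1}\bigl[(1+F_1\Delta t)^m-1\bigr] \le \tfrac{C'}{F_1}(e^{F_1 t_m}-1)\Delta t$, which is the asserted estimate with $C=C'/F_1$.

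The main obstacle I anticipate is the third telescoping piece, because the discrepancy $\bm u(t_m+s)-e^{s\bA}\bm u(t_m)$ is precisely the quantity that the LRI1a scheme neglects and therefore controls the local truncation error. The key observation that keeps the argument going under only $\bm u\in\mathcal{C}([0,T];\mathbb{R}^N)$ is that this discrepancy need not be estimated through a derivative of $\bm u$ at all; the Duhamel identity \eqref{exact2} expresses it as $\int_0^s e^{(s-\sigma)\bA}\bmf(\bm u(t_m+\sigma))\,d\sigma$, which is bounded by $sF_0$ purely via the MBP of $\bm u$ and Lemma \ref{bode}. Thus the entire regularity requirement is shifted onto $f$ (through $F_0$, $F_1$), exactly as promised in the introduction. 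All other steps are routine applications of the mean value theorem, Lemmas \ref{bode} and \ref{lemma_phi_psi}, and a discrete Gronwall inequality.
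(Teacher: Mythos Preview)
Your proposal is correct and follows essentially the same approach as the paper: the paper introduces the truncation error $\bR_1(t_m)$ and splits it into the two quantities $Q_1=\|\bm\lambda(0)-\bm\lambda(s)\|_\infty$ and $Q_2=\|\bmf(e^{s\bA}\bm u(t_m))-\bmf(\bm u(t_m+s))\|_\infty$, which are exactly your second and third telescoping pieces, while your first piece is the paper's stability term $\Delta t\,\|\bmf(\bm U_m)-\bmf(\bm u(t_m))\|_\infty$. The resulting recursion, the constant $C'=c_0=\tfrac12[(F_0+\beta F_1)\|\bA\|_\infty+F_0F_1]$, and the final Gronwall step are identical.
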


\begin{proof}
For the LRI1a scheme \eqref{LRI1a}, we have 
\begin{align}\label{trucation1}
 \bm u(t_{m+1})=e^{\Delta t \bA}\left(\bm u(t_m)+\Delta t \bmf(\bm u(t_m))\right) + \bR_1(t_m),   
\end{align}
where $\bR_1(t_m)$ is the corresponding truncation error.
This together with \eqref{LRI1a} gives us
$$\bm e_{m+1}=e^{\Delta t \bA}\left[\bm e_m+\Delta t \left(\bmf(\bm U_m)-\bmf(\bm u(t_m))\right)\right] - \bR_1(t_m).$$
Since $\|e^{\Delta t \bA}\|_{\infty} \le 1$, we have
\begin{align}\label{errorR1.1}
\|\bm e_{m+1}\|_{\infty} \le \|\bm e_m\|_{\infty} + \Delta t \|\bmf(\bm U_m)-\bmf(\bm u(t_m))\|_{\infty} + \|\bR_1(t_m)\|_{\infty}.    
\end{align}
Observe that both the exact and numerical solutions of \eqref{semi-discrete} satisfy the MBP when $0<\Delta t \le \omega_0$, meaning that $\|\bm U_m\|_{\infty}\le \beta, \|\bm u(t_m)\|_{\infty} \le \beta$.  By using the Lipschitz continuity of $\bmf$, we obtain
\begin{align}\label{errorR1.2}
 \|\bmf(\bm U_m)-\bmf(\bm u(t_m))\|_{\infty} \le F_1\|\bm U_m-\bm u(t_m)\|_{\infty}=F_1\|\bm e_m\|_{\infty}.
\end{align}
On the other hand,  by comparing \eqref{exact1} and \eqref{trucation1}, we can rewrite $\bR_1(t_m)$ as follows:
\begin{align*}
  \bR_1(t_m)&=\int_0^{\Delta t} e^{(\Delta t -s)\bA}\bmf(\bm u(t_m+s))\,ds -\Delta t e^{\Delta t \bA} \bmf(\bm u(t_m)) = \int_0^{\Delta t} \left[e^{(\Delta t-s)\bA}\bmf(\bm u(t_m+s))-e^{\Delta t \bA}\bmf(\bm u(t_m))\right]ds.
\end{align*}
Therefore,
\begin{align*}
\|\bR_1(t_m)\|_{\infty} &\le \int_0^{\Delta t} \|e^{\Delta t \bA}\bmf(\bm u(t_m))- e^{(\Delta t-s)\bA}\bmf(\bm u(t_m+s))\|_{\infty}\,ds\\
&\le \int_0^{\Delta t} \left (\|e^{\Delta t \bA}\bmf(\bm u(t_m))-e^{(\Delta t-s)\bA}\bmf(e^{s\bA}\bm u(t_m))\|_{\infty} + \|e^{(\Delta t-s)\bA}\bmf(e^{s\bA}\bm u(t_m))- e^{(\Delta t-s)\bA}\bmf(\bm u(t_m+s))\|_{\infty}\right )\,ds\\
    &\le \int_0^{\Delta t} \left (\|e^{\Delta t \bA}\bmf(\bm u(t_m))-e^{(\Delta t-s)\bA}\bmf(e^{s\bA}\bm u(t_m))\|_{\infty} + \|\bmf(e^{s\bA}\bm u(t_m))- \bmf(\bm u(t_m+s))\|_{\infty}\right )\,ds\\
    &=: \int_0^{\Delta t} (Q_1+Q_2)\,ds,
\end{align*}
where 
\begin{align*}
Q_1=\|e^{\Delta t \bA}\bmf(\bm u(t_m))-e^{(\Delta t-s)\bA}\bmf(e^{s\bA}\bm u(t_m))\|_{\infty},\quad Q_2=\|\bmf(e^{s\bA}\bm u(t_m))- \bmf(\bm u(t_m+s))\|_{\infty}.
\end{align*}
Using Lemma \ref{lemma_phi_psi} and the mean value theorem, we deduce that
\begin{equation}\label{Q1est}
Q_1=\|\bm\lambda(0)-\bm\lambda(s)\|_{\infty}\le s(F_0+\beta F_1)\|\bA\|_{\infty}.
\end{equation}
To estimate $Q_2$,  we apply the formula \eqref{exact2} to obtain
\begin{align}\label{Q2est}
 Q_2&\le F_1\|e^{s\bA}\bm u(t_m)-\bm u(t_m+s)\|_{\infty}=F_1\left\|\int_0^se^{(s-\sigma)\bA}\bmf(\bm u(t_m+\sigma))\,d\sigma\right\|_{\infty} \nonumber  \\
 & \le F_1\int_0^s \|e^{(s-\sigma)\bA}\|_{\infty}\|\bmf(\bm u(t_m+\sigma))\|_{\infty}d\sigma \le F_0F_1s.
\end{align}
Thus we obtain from \eqref{Q1est} and \eqref{Q2est} that
\begin{align}
 \|\bR_1(t_m)\|_{\infty}&\le \int_0^{\Delta t}(Q_1+Q_2)\,ds \le \int_0^{\Delta t} [(F_0+\beta F_1)\|\bA\|_{\infty}+F_0F_1]s\,ds  \notag\\
 & =\frac 12[(F_0+\beta F_1)\|\bA\|_{\infty}+F_0F_1]\Delta t^2  =:  c_0\Delta t^2. \label{errorR1.3}
\end{align}
By \eqref{errorR1.1}, \eqref{errorR1.2} and \eqref{errorR1.3},  the following estimate  holds:
\begin{equation}
\|\bm e_{m+1}\|_{\infty} \le (1+F_1\Delta t) \|\bm e_m\|_{\infty} + c_0\Delta t^2.
\end{equation}
This implies
\begin{align*}
\|\bm e_m\|_{\infty} + \frac{ c_0 \Delta t}{F_1}&\le (1+F_1\Delta t)\left(\|\bm e_{m-1}\|_{\infty} +\frac{c_0\Delta t}{F_1}\right) \le (1+F_1\Delta t)^m \left(\|\bm e_{0}\|_{\infty} +\frac{ c_0\Delta t}{F_1}\right).
\end{align*}
Note that $\bm e_0=0$,  thus
\begin{align*}
\|\bm e_m\|_{\infty} &\le \frac{ c_0\Delta t}{F_1}\left[(1+F_1\Delta t)^m-1\right]\le \frac{ c_0\Delta t}{F_1}(e^{F_1t_m}-1)=: C(e^{F_1t_m}-1)\Delta t,
\end{align*}
where $C=\frac{ c_0}{F_1}=\frac 12\left[\left(\frac{F_0}{F_1}+\beta\right)\|\bA\|_{\infty}+F_0\right]>0.$
\end{proof}

\begin{theorem}\label{error_estimate_1b}
Given any fixed terminal time $T>0$ and spatial mesh size $h>0$. Assume that $\bm u\in \mathcal{C}([0,T];\mathbb{R}^N)$ is the exact solution of \eqref{semi-discrete} and $\{\bm U_m\}$ is the numerical solution generated by the LRI1b scheme (cf.  Equation~\eqref{LRI1b}). Then, there exists a constant $C=C(\beta, F_0,F_1,\|\bA\|_{\infty})>0$ independent of $\Delta t$ such that the following estimate is true for all $0<\Delta t \le \omega_0$:
$$\|\bm U_m-\bm u(t_m)\|_{\infty} \le C(e^{F_1t_m}-1)\Delta t, \quad m=0,1,\ldots,M.$$
\end{theorem}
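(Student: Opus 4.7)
\medskip

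\noindent\textbf{Proof plan.} The plan is to mirror the proof of Theorem \ref{error_estimate_1a}, carefully adapting the two places where the LRI1b scheme~\eqref{LRI1b} differs from the LRI1a scheme~\eqref{LRI1a}: the nonlinearity is evaluated at $e^{\Delta t\bA}\bm U_m$ rather than $\bm U_m$ (and not pre-multiplied by $e^{\Delta t\bA}$), which changes both how the Lipschitz step is carried out and how the truncation term is split. First I would plug the exact solution into the LRI1b step to define the truncation error $\bR_1(t_m)$ through
\begin{equation*}
\bm u(t_{m+1})=e^{\Delta t\bA}\bm u(t_m)+\Delta t\,\bmf(e^{\Delta t\bA}\bm u(t_m))+\bR_1(t_m),
\end{equation*}
and subtract \eqref{LRI1b} to obtain
\begin{equation*}
\bm e_{m+1}=e^{\Delta t\bA}\bm e_m+\Delta t\bigl[\bmf(e^{\Delta t\bA}\bm U_m)-\bmf(e^{\Delta t\bA}\bm u(t_m))\bigr]-\bR_1(t_m).
\end{equation*}

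Next I invoke Lemma~\ref{bode} together with the MBP (Theorems \ref{MBP_exact_solution} and \ref{thrm:MBP_LRI1}), so that both $e^{\Delta t\bA}\bm U_m$ and $e^{\Delta t\bA}\bm u(t_m)$ lie in $[-\beta,\beta]^N$; the Lipschitz constant of $\bmf$ on that box is $F_1$, and $\|e^{\Delta t\bA}\|_\infty\le 1$ allows me to absorb the exponential factor to yield
\begin{equation*}
\|\bm e_{m+1}\|_\infty\le (1+F_1\Delta t)\|\bm e_m\|_\infty+\|\bR_1(t_m)\|_\infty.
\end{equation*}

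The main technical step, analogous to the estimate of $Q_1+Q_2$ in Theorem~\ref{error_estimate_1a}, is the bound on $\bR_1(t_m)$. Using~\eqref{exact1} I rewrite $\bR_1(t_m)=\int_0^{\Delta t}\bigl[e^{(\Delta t-s)\bA}\bmf(\bm u(t_m+s))-\bmf(e^{\Delta t\bA}\bm u(t_m))\bigr]ds$ and split the integrand by inserting $\pm e^{(\Delta t-s)\bA}\bmf(e^{s\bA}\bm u(t_m))=\pm\bm\lambda(s)$ from Lemma~\ref{lemma_phi_psi}. The first piece is controlled exactly as $Q_2$ in Theorem \ref{error_estimate_1a} by combining $\|e^{(\Delta t-s)\bA}\|_\infty\le 1$, the Lipschitz bound $F_1$, and Duhamel's formula~\eqref{exact2} to get $F_0F_1 s$. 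The second piece $\|\bm\lambda(s)-\bm\lambda(\Delta t)\|_\infty$ (note that $\bm\lambda(\Delta t)=\bmf(e^{\Delta t\bA}\bm u(t_m))$) is handled via the mean value theorem and Lemma~\ref{lemma_phi_psi}(i), yielding $(F_0+\beta F_1)\|\bA\|_\infty(\Delta t-s)$. Integrating over $[0,\Delta t]$ reproduces the very same quadratic bound $\|\bR_1(t_m)\|_\infty\le c_0\Delta t^2$ with $c_0=\tfrac12[(F_0+\beta F_1)\|\bA\|_\infty+F_0F_1]$ as in Theorem~\ref{error_estimate_1a}.

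Finally, with the recurrence $\|\bm e_{m+1}\|_\infty\le(1+F_1\Delta t)\|\bm e_m\|_\infty+c_0\Delta t^2$ in hand, the discrete Gronwall argument from the previous theorem closes the proof in an identical fashion, giving $\|\bm e_m\|_\infty\le C(e^{F_1 t_m}-1)\Delta t$ with $C=c_0/F_1$ depending only on $\beta,F_0,F_1,\|\bA\|_\infty$. The only subtlety to watch for is the direction of the Taylor-type expansion of $\bm\lambda$: for LRI1a one compares $\bm\lambda(s)$ to $\bm\lambda(0)$, whereas for LRI1b the comparison point is $\bm\lambda(\Delta t)$. This is a cosmetic change and does not alter the constants, so I do not anticipate any genuine obstacle beyond keeping the bookkeeping of endpoints straight.
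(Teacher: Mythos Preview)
Your proposal is correct and follows essentially the same approach as the paper: define the truncation error by plugging the exact solution into the LRI1b step, use the MBP together with Lemma~\ref{bode} for the Lipschitz estimate, split the truncation integrand by inserting $\pm\bm\lambda(s)$, bound the two pieces by $(F_0+\beta F_1)\|\bA\|_\infty(\Delta t-s)$ and $F_0F_1 s$ respectively, and conclude via the same discrete Gronwall argument. The paper's proof is identical in structure and constants (it merely labels the truncation error $\bR_2$ and the new piece $Q_0$), and you have correctly identified that the only change from LRI1a is comparing $\bm\lambda(s)$ to $\bm\lambda(\Delta t)$ rather than $\bm\lambda(0)$.
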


\begin{proof}
For the LRI1b scheme \eqref{LRI1b}, we have  
\begin{align}\label{truncation2}
 \bm u(t_{m+1})=e^{\Delta t \bA} \bm u(t_m) + \Delta t \bmf\left(e^{\Delta t \bA}\bm u(t_m)\right) + \bR_2(t_m),   
\end{align}
where $\bR_2(t_m)$ is the corresponding truncation error. 
This together with \eqref{LRI1b} gives us
$$\bm e_{m+1}=e^{\Delta t \bA}\bm e_m + \Delta t \left[\bmf\left(e^{\Delta t \bA}\bm U_m\right)-\bmf\left(e^{\Delta t \bA}\bm u(t_m)\right)\right] - \bR_2(t_m).$$
Since $\|e^{\Delta t \bA}\|_{\infty} \le 1$, we have
\begin{align}\label{error2.1}
 \|\bm e_{m+1}\|_{\infty} \le \|\bm e_m\|_{\infty} + \Delta t \left\|\bmf\left(e^{\Delta t \bA}\bm U_m\right)-\bmf\left(e^{\Delta t \bA}\bm u(t_m)\right)\right\|_{\infty} + \|\bR_2(t_m)\|_{\infty}.   
\end{align}
Notice that $\|e^{\Delta t \bA}\bm u(t_m)\|_{\infty}\le \beta$ and $\|e^{\Delta t \bA}\bm U_m\|_{\infty}\le \beta$ under the condition $0<\Delta t \le \omega_0$. Using the Lipschitz continuity of $\bmf$, we obtain
\begin{align}\label{error2.2}
 \left\|\bmf\left(e^{\Delta t \bA}\bm U_m\right)-\bmf\left(e^{\Delta t \bA}\bm u(t_m)\right)\right\|_{\infty} \le F_1 \left\|e^{\Delta t \bA}\bm U_m- e^{\Delta t \bA}\bm u(t_m)\right\|_{\infty}\le F_1\|\bm e_m\|_{\infty}.   
\end{align}
Compare \eqref{truncation2} and \eqref{exact1}, we can rewrite $\bR_2(t_m)$ as follows:
\begin{align*}
 \bR_2(t_m)&=\int_0^{\Delta t} e^{(\Delta t -s)\bA}\bmf\left(\bm u(t_m+s)\right)\,ds -\Delta t \bmf(e^{\Delta t \bA}\bm u(t_m))=\int_0^{\Delta t} \left [e^{(\Delta t -s)\bA}\bmf\left(\bm u(t_m+s)\right)-\bmf(e^{\Delta t \bA}\bm u(t_m))\right ]\,ds.
\end{align*}
Therefore,
\begin{align*}
\|\bR_2(t_m)\|_{\infty} &\le \int_0^{\Delta t} \left (\|\bmf(e^{\Delta t \bA}\bm u(t_m))-e^{(\Delta t -s)\bA}\bmf(e^{s\bA}\bm u(t_m))\|_{\infty} + \|e^{(\Delta t -s)\bA}\bmf(e^{s\bA}\bm u(t_m))-e^{(\Delta t -s)\bA}\bmf\left(\bm u(t_m+s)\right)\|_{\infty}\right )\, ds\\
&\le \int_0^{\Delta t} \left ( \|\bmf(e^{\Delta t \bA}\bm u(t_m))-e^{(\Delta t -s)\bA}\bmf(e^{s\bA}\bm u(t_m))\|_{\infty}+\|\bmf(e^{s\bA}\bm u(t_m))-\bmf\left(\bm u(t_m+s)\right)\|_{\infty}\right ) \,ds\\
&=: \int_0^{\Delta t} (Q_0+Q_2) ds,
\end{align*}
where 
$$Q_0=\|\bmf(e^{\Delta t \bA}\bm u(t_m))-e^{(\Delta t -s)\bA}\bmf(e^{s\bA}\bm u(t_m))\|_{\infty}, $$
and $Q_2$ is the same term as appeared in the proof of the previous theorem.
Using Lemma \ref{lemma_phi_psi} and the mean value theorem, we deduce that
\begin{equation}\label{Q0est}
Q_0=\|\bm\lambda(\Delta t)-\bm\lambda(s)\|_{\infty}\le (\Delta t -s)(F_0+\beta F_1)\|\bA\|_{\infty}.
\end{equation}
Thus we obtain from \eqref{Q0est} and \eqref{Q2est} that
\begin{align}
 \|\bR_2(t_m)\|_{\infty}&\le\int_0^{\Delta t}(Q_0+Q_2)\,ds\le \int_0^{\Delta t}[(\Delta t -s)(F_0+\beta F_1)\|\bA\|_{\infty}+F_0F_1s]\,ds \notag\\
 &=\frac 12[(F_0+\beta F_1)\|\bA\|_{\infty}+F_0F_1]\Delta t^2  = c_0\Delta t^2,   \label{error2.3}
\end{align}
 where $c_0$ is the same constant as appeared in the proof of the previous theorem. Combine \eqref{error2.1}, \eqref{error2.2} and \eqref{error2.3} we have
$$\|\bm e_{m+1}\|_{\infty} \le (1+F_1\Delta t)\|\bm e_m\|_{\infty} + c_0\Delta t^2.$$
By the same arguments as in  Theorem \ref{error_estimate_1a}, we obtain
$$\|\bm e_m\|_{\infty}\le C(e^{F_1t_m}-1)\Delta t,$$
where  $C=\frac{c_0}{F_1}>0$ is independent of $\Delta t$.
\end{proof}

\begin{theorem}\label{error_estimate_2}
 Given any fixed terminal time $T>0$ and spatial mesh size $h>0$. Suppose that $f$ satisfies assumptions (A1) and (A3). Assume that $\bm u\in \mathcal{C}([0,T];\mathbb{R}^N)$ is the exact solution of \eqref{semi-discrete} and $\{\bm U_m\}$ is the numerical solution generated by the LRI2 scheme (cf.  Equation~\eqref{LRI2a}). Then,  there exists $C=C(\beta,\delta_0,\omega_0, F_0,F_1,\widetilde{F_1}$, $F_2,\widetilde{F_2}, F_3,\|\bA\|_{\infty})>0$ independent of $\Delta t$ such that the following estimate is true for all $0<\Delta t \le \delta_0\omega_0$:
$$\|\bm U_m-\bm u(t_m)\|_{\infty} \le C(e^{F_4t_m}-1)\Delta t^2, \quad m=0,1,\ldots,M,$$
where $F_4:=F_1+\frac 12\delta_0\omega_0F_3.$
\end{theorem}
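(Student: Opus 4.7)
The plan is to follow the same template used for Theorems \ref{error_estimate_1a} and \ref{error_estimate_1b}, but with more delicate Taylor-type expansions inside the truncation error. Let $\bR_3(t_m)$ denote the residual produced when the exact solution is plugged into the LRI2 update, so that
$$\bm u(t_{m+1}) = e^{\Delta t\bA}\bm u(t_m) + \frac{\Delta t}{2}\left[e^{\Delta t\bA}\bmf(\bm u(t_m)) + \bmf(e^{\Delta t\bA}\bm u(t_m))\right] + \frac{\Delta t^2}{2}e^{\Delta t\bA}\frac{\partial\bmf}{\partial\bm u}(\bm u(t_m))\bmf(\bm u(t_m)) + \bR_3(t_m).$$
Subtracting \eqref{LRI2a} and taking the infinity norm, using $\|e^{\Delta t\bA}\|_\infty\le 1$ from Lemma \ref{bode}, the Lipschitz continuity of $\bmf$ on $[-\beta,\beta]$ with constant $F_1$, and the Lipschitz continuity of the map $\bm v\mapsto\frac{\partial\bmf}{\partial\bm u}(\bm v)\bmf(\bm v)$ on $[-\beta,\beta]$ with constant $F_3$ (both applicable because Theorems \ref{MBP_exact_solution} and \ref{thrm:MBP_LRI2} confine both $\bm u(t_m)$ and $\bm U_m$ to $[-\beta,\beta]^N$), I obtain
$$\|\bm e_{m+1}\|_\infty \le \left(1 + F_1\Delta t + \tfrac{F_3}{2}\Delta t^2\right)\|\bm e_m\|_\infty + \|\bR_3(t_m)\|_\infty \le (1 + F_4\Delta t)\|\bm e_m\|_\infty + \|\bR_3(t_m)\|_\infty,$$
where the last inequality uses $\Delta t\le\delta_0\omega_0$. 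Once a local bound $\|\bR_3(t_m)\|_\infty\le c_1\Delta t^3$ is established, the same discrete Gronwall argument used in the proof of Theorem \ref{error_estimate_1a} (with $F_4$ in place of $F_1$) upgrades it to $\|\bm e_m\|_\infty \le (c_1/F_4)(e^{F_4 t_m}-1)\Delta t^2$.

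The substance of the proof therefore lies in showing $\|\bR_3(t_m)\|_\infty = O(\Delta t^3)$. I would start from Duhamel's formula \eqref{exact1} and expand the integrand by applying Duhamel a second time: writing $\bm z(s) := \bm u(t_m+s) - e^{s\bA}\bm u(t_m) = \int_0^s e^{(s-\sigma)\bA}\bmf(\bm u(t_m+\sigma))\,d\sigma$, a componentwise second-order Taylor expansion of $\bmf$ around $e^{s\bA}\bm u(t_m)$ yields
$$\bmf(\bm u(t_m+s)) = \bmf(e^{s\bA}\bm u(t_m)) + \frac{\partial\bmf}{\partial\bm u}(e^{s\bA}\bm u(t_m))\bm z(s) + \bm\rho_1(s),$$
with $\|\bm\rho_1(s)\|_\infty \le \frac{1}{2}\widetilde{F_2}\|\bm z(s)\|_\infty^2 \le \frac{1}{2}\widetilde{F_2}F_0^2 s^2$; note that $\widetilde{F_2}$ rather than $F_2$ is required because $e^{s\bA}\bm u(t_m)+\bm z(s)$ may lie anywhere in $[-(\beta+\delta_0\omega_0 F_0),\beta+\delta_0\omega_0 F_0]^N$. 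Next, replacing $\bmf(\bm u(t_m+\sigma))$ by $\bmf(\bm u(t_m))$ inside the integral defining $\bm z(s)$, and controlling the substitution by Lemma \ref{lemma_phi_psi} in the spirit of the estimate of $Q_2$ in Theorem \ref{error_estimate_1a}, gives $\bm z(s) = s\bmf(\bm u(t_m)) + \bm\rho_2(s)$ with $\|\bm\rho_2(s)\|_\infty = O(s^2)$. Substituting both expansions into $\int_0^{\Delta t} e^{(\Delta t-s)\bA}\bmf(\bm u(t_m+s))\,ds$, and then freezing $e^{(\Delta t-s)\bA}\frac{\partial\bmf}{\partial\bm u}(e^{s\bA}\bm u(t_m))\bmf(\bm u(t_m))$ at $s=0$ using Lemma \ref{lemma_phi_psi}(ii) to bound the freezing error by $F_0(F_1+\beta F_2)\|\bA\|_\infty \cdot s$, identifies $\bR_3(t_m)$ with the trapezoidal-rule quadrature error on $\bm g(s) = e^{(\Delta t-s)\bA}\bmf(e^{s\bA}\bm u(t_m))=\bm\lambda(s)$, plus a handful of remainders, each of which integrates against $s$ or $s^2$ on $[0,\Delta t]$ to yield $O(\Delta t^3)$. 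The trapezoidal piece itself is $O(\Delta t^3)$ by the standard error formula together with the bound $\|\bm\lambda''\|_\infty \le (F_0+3\beta F_1+\beta^2 F_2)\|\bA\|_\infty^2$ from Lemma \ref{lemma_phi_psi}(i).

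The main obstacle is the bookkeeping needed to organize these five or six error contributions so that the final constant can be written as a polynomial in $\beta,\delta_0,\omega_0,F_0,F_1,\widetilde{F_1},F_2,\widetilde{F_2},F_3,\|\bA\|_\infty$. In particular, the appearance of $\widetilde{F_1}$ and $\widetilde{F_2}$ in the hypotheses is forced precisely by the Taylor-expansion step above, where the argument $e^{s\bA}\bm u(t_m)+\bm z(s)$ may leave $[-\beta,\beta]$ and one must therefore use the enlarged-interval bounds on $|f'|$ and $|f''|$. Once these bounds are deployed correctly, everything else is a routine but lengthy repetition of the arguments already used in Theorems \ref{error_estimate_1a} and \ref{error_estimate_1b}.
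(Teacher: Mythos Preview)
Your proposal is correct and follows essentially the same decomposition as the paper: the paper writes $\bR_3(t_m)=\int_0^{\Delta t}(I_1+I_2+I_3+I_4)\,ds$, where $I_4$ is the trapezoidal error on $\bm\lambda$ (controlled by Lemma~\ref{lemma_phi_psi}(i)), $I_3$ is the freezing error $s(\bm\psi(s)-\bm\psi(0))$ (controlled by Lemma~\ref{lemma_phi_psi}(ii)), $I_2$ is the second-order Taylor remainder, and $I_1$ is the $O(s^2)$ error in replacing $\bm u(t_m+s)$ by $e^{s\bA}\bm u(t_m)+s\bmf(\bm u(t_m))$---exactly the four ingredients you identify, with the error recursion and Gronwall step handled identically.

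One small correction on your bookkeeping remark: in \emph{your} ordering---Taylor-expand $\bmf$ at $e^{s\bA}\bm u(t_m)$ with increment $\bm z(s)=\bm u(t_m+s)-e^{s\bA}\bm u(t_m)$---both the base point and the target $\bm u(t_m+s)$ lie in $[-\beta,\beta]^N$ by Theorem~\ref{MBP_exact_solution}, so the Lagrange midpoint does too, and $F_1,F_2$ would already suffice for your $\bm\rho_1$ and the $\frac{\partial\bmf}{\partial\bm u}\bm\rho_2$ term. The enlarged constants $\widetilde{F_1},\widetilde{F_2}$ are genuinely forced in the \emph{paper's} ordering, which first approximates $\bm u(t_m+s)\approx e^{s\bA}\bm u(t_m)+s\bmf(\bm u(t_m))$ and then Taylor-expands $\bmf$ around $e^{s\bA}\bm u(t_m)$ with increment $s\bmf(\bm u(t_m))$; it is this intermediate point (bounded only by $\beta+\delta_0\omega_0 F_0$, cf.\ \eqref{over1}) that can leave $[-\beta,\beta]$. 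So your explanation of where $\widetilde{F_1},\widetilde{F_2}$ come from actually describes the paper's organization rather than your own, but this does not affect the validity of your argument.
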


\begin{proof}
Let $\bR_3(t_m)$ be the truncation error of the LRI2 scheme \eqref{LRI2a}, we have
\begin{align}
 \bm u(t_{m+1})=&\;e^{\Delta t \bA}\bm u(t_m) + \frac{\Delta t}{2}\left[e^{\Delta t \bA}\bmf(\bm u(t_m))+\bmf(e^{\Delta t \bA} \bm u(t_m)\right] + \frac{\Delta t^2}{2}e^{\Delta t \bA}\frac{\partial \bmf}{\partial \bm u}(\bm u(t_m))\bmf(\bm u(t_m)) + \bR_3(t_m).   \label{truncation3}
\end{align}
This together with \eqref{LRI2a} gives us
\begin{align*}
 \bm e_{m+1}=\;&e^{\Delta t \bA} \bm e_m + \frac{\Delta t}{2}\left[e^{\Delta t \bA}(\bmf(\bm U_m)-\bmf(\bm u(t_m))+\bmf(e^{\Delta t \bA}\bm U_m)-\bmf(e^{\Delta t \bA}\bm u(t_m))\right] \\
 & + \frac{\Delta t^2}{2}e^{\Delta t \bA} \left[\frac{\partial \bmf}{\partial \bm u}(\bm U_m)\bmf(\bm U_m)-\frac{\partial \bmf}{\partial \bm u}(\bm u(t_m))\bmf(\bm u(t_m))\right] - \bR_3(t_m).
\end{align*}
Therefore,
\begin{align*}
\|\bm e_{m+1}\|_{\infty} &\le \|\bm e_m\|_{\infty} +\frac{\Delta t}{2}\left[\|\bmf(\bm U_m)-\bmf(\bm u(t_m)\|_{\infty} + \|\bmf(e^{\Delta t \bA}\bm U_m)-\bmf(e^{\Delta t \bA}\bm u(t_m))\|_{\infty}\right]\\
& +\frac{\Delta t^2}{2}\left\|\frac{\partial \bmf}{\partial \bm u}(\bm U_m)\bmf(\bm U_m)-\frac{\partial \bmf}{\partial \bm u}(\bm u(t_m))\bmf(\bm u(t_m))\right\|_{\infty} +\|\bR_3(t_m)\|_{\infty}.
\end{align*}
Using the similar arguments as in proofs of Theorems \ref{error_estimate_1a} and \ref{error_estimate_1b}, we can easily obtain
\begin{align*}
 \|\bmf(\bm U_m)-\bmf(\bm u(t_m)\|_{\infty}& \le F_1\|\bm U_m-\bm u(t_m)\|_{\infty}= F_1\|\bm e_m\|_{\infty},\\
 \|\bmf(e^{\Delta t \bA}\bm U_m)-\bmf(e^{\Delta t \bA}\bm u(t_m))\|_{\infty}& \le F_1\|e^{\Delta t \bA}\bm U_m-e^{\Delta t \bA}\bm u(t_m)\|_{\infty}\le F_1\|\bm e_m\|_{\infty}.
\end{align*}
Define $\bm l(\bv)=\frac{\partial \bmf}{\partial \bm u}(\bv)\bmf(\bv)$ for $\bv\in\mathbb{R}^N$. It is easy to verify that $\| \frac{\partial \bm l}{\partial \bv}\|_{\infty}\le F_3$ when $\|\bv\|_{\infty}\le \beta$, then 
$$\|\bm l(\bm U_m)-\bm l(\bm u(t_m))\|_{\infty}\le F_3\|\bm U_m-\bm u(t_m)\|_{\infty} = F_3\|\bm e_m\|_{\infty}.$$
Consequently, we have
\begin{align}\label{error3.1}
 \|\bm e_{m+1}\|_{\infty} \le \left(1+F_1\Delta t +\frac 12F_3\Delta t^2\right)\|\bm e_m\|_{\infty}+\|\bR_3(t_m)\|_{\infty}.   
\end{align}
On the other hand,  by comparing \eqref{truncation3} and \eqref{exact1}, for $\bR_3(t_m)$ we arrive at
\begin{align*}
\bR_3(t_m)&=\int_0^{\Delta t} e^{(\Delta t -s)\bA}\bmf(\bm u(t_m+s))\,ds- \frac{\Delta t}{2}\left[e^{\Delta t \bA}\bmf(\bm u(t_m)) +\bmf(e^{\Delta t \bA} \bm u(t_m)\right]-\frac{\Delta t^2}{2}e^{\Delta t \bA}\frac{\partial \bmf}{\partial \bm u}(\bm u(t_m))\bmf(\bm u(t_m))\\
&\hspace{-0.45cm}= \int_0^{\Delta t} \left [e^{(\Delta t -s)\bA} \bmf(\bm u(t_m+s)) -se^{\Delta t \bA}\frac{\partial \bmf}{\partial \bm u}(\bm u(t_m))\bmf(\bm u(t_m))- \left(1-\frac{s}{\Delta t}\right)e^{\Delta t \bA}\bmf(\bm u(t_m))-\frac{s}{\Delta t}\bmf(e^{\Delta t\bA}\bm u(t_m))\right ]\,ds\\
&\hspace{-0.45cm}= \int_0^{\Delta t}(I_1+I_2+I_3+I_4)\,ds,
\end{align*}
where $I_i, i=1,2,3,4$ are defined as follow:
\begin{align*}
I_1&=e^{(\Delta t -s)\bA} \left[\bmf(\bm u(t_m+s))-\bmf(e^{s\bA}\bm u(t_m)+s\bmf(\bm u(t_m)))\right],\\
I_2&=e^{(\Delta t -s)\bA}\bigg[\bmf(e^{s\bA}\bm u(t_m)+s\bmf(\bm u(t_m)))-\left(\bmf(e^{s\bA}\bm u(t_m))+s\frac{\partial \bmf}{\partial \bm u}(e^{s\bA}\bm u(t_m))\bmf(\bm u(t_m))\right)\bigg],\\
I_3&=se^{(\Delta t -s)\bA} \frac{\partial \bmf}{\partial \bm u}(e^{s\bA}\bm u(t_m))\bmf(\bm u(t_m))-se^{\Delta t \bA}\frac{\partial \bmf}{\partial \bm u}(\bm u(t_m))\bmf(\bm u(t_m)),\\
I_4&= e^{(\Delta t -s)\bA} \bmf(e^{s\bA}\bm u(t_m)) - \left(1-\frac{s}{\Delta t}\right)e^{\Delta t \bA}\bmf(\bm u(t_m))-\frac{s}{\Delta t}\bmf(e^{\Delta t\bA}\bm u(t_m)).
\end{align*}
Since $\|\bm u(t_m)\|_{\infty}\le \beta$ and $s\le \Delta t\le \delta_0\omega_0$, we have
\begin{align}\label{over1}
 \|e^{s\bA}\bm u(t_m)+s\bmf(\bm u(t_m))\|_{\infty}\le \|e^{s\bA}\bm u(t_m)\|_{\infty}+s\|\bmf(\bm u(t_m))\|_{\infty}\le \beta+\delta_0\omega_0F_0.  
\end{align}
It is easy to see that when $\bm v, \bm w\in\mathbb{R}^N$ and $\|\bm v\|_{\infty},\|\bm w\|_{\infty}\le \beta + \delta_0\omega_0F_0$, then
$$\|\bmf(\bm v)-\bmf(\bm w)\|_{\infty}\le \widetilde{F_1}\|\bm v-\bm w\|_{\infty}.$$
This, together with the formula \eqref{exact2}, gives us
\begin{align*}
    \|I_1\|_{\infty} &\le \left\|\bmf(\bm u(t_m+s))-\bmf(e^{s\bA}\bm u(t_m)+s\bmf(\bm u(t_m)))\right\|_{\infty}\le \widetilde{F_1}\left\|\bm u(t_m+s)-\left(e^{s\bA}\bm u(t_m)+s\bmf(\bm u(t_m)\right)\right\|_{\infty}\\
    & = \widetilde{F_1}\left\|s\bmf(\bm u(t_m)) - \int_0^s e^{(s-\sigma)\bA}\bmf(\bm u(t_m+\sigma))\,d\sigma\right\|_{\infty}=\widetilde{F_1}\left\| \int_0^s \left [\bmf(\bm u(t_m))- e^{(s-\sigma)\bA}\bmf(\bm u(t_m+\sigma))\right ]\,d\sigma\right\|_{\infty}\\
     & \le \widetilde{F_1} \int_0^s \big [\|\bmf(\bm u(t_m))-e^{s\bA}\bmf(\bm u(t_m))\|_{\infty} + \|e^{s\bA}\bmf(\bm u(t_m))-e^{(s-\sigma)\bA} \bmf(e^{\sigma \bA}\bm u(t_m))\|_{\infty} \\
    &\qquad\qquad +\|e^{(s-\sigma)\bA} \bmf(e^{\sigma \bA}\bm u(t_m))- e^{(s-\sigma)\bA} \bmf(\bm u(t_m+\sigma))\|_{\infty}\big ]\, d\sigma\\
    &=: \widetilde{F_1} \int_0^s (J_1+J_2+J_3) \,d\sigma,
\end{align*}
where $J_1,J_2$ and $J_3$ are given by
\begin{align*}
J_1&=\|\bmf(\bm u(t_m))-e^{s\bA}\bmf(\bm u(t_m))\|_{\infty},\\
J_2&=\|e^{s\bA}\bmf(\bm u(t_m))-e^{(s-\sigma)\bA} \bmf(e^{\sigma \bA}\bm u(t_m))\|_{\infty},\\
J_3&=\|e^{(s-\sigma)\bA} \bmf(e^{\sigma \bA}\bm u(t_m))- e^{(s-\sigma)\bA} \bmf(\bm u(t_m+\sigma))\|_{\infty}.
\end{align*}
By the mean value theorem, we have
$J_1\le F_0\|e^{s\bA}-I\|_{\infty}\le sF_0\|\bA\|_{\infty}.$
Using Lemma \ref{lemma_phi_psi} and the mean value theorem, we obtain
$J_2\le (F_0+\beta F_1)\|\bA\|_{\infty} \sigma.$
Using the formula \eqref{exact2}, we have
\begin{align*}
 J_3&\le F_1\|e^{\sigma \bA}\bm u(t_m)-\bm u(t_m+\sigma)\|_{\infty}=F_1\left\|\int_0^{\sigma}e^{(\sigma-\xi)\bA}\bmf(\bm u(t_m+\xi))d\xi\right\|_{\infty}   \le F_1\int_0^{\sigma} F_0d\xi =F_0F_1\sigma.
\end{align*}
Therefore,
\begin{align}\label{I1est}
 \|I_1\|_{\infty}&\le \widetilde{F_1}\int_0^s(J_1+J_2+J_3)\,d\sigma \le \widetilde{F_1}\int_0^s [F_0\|\bA\|_{\infty}s + (F_0+\beta F_1)\|\bA\|_{\infty}\sigma + F_0F_1\sigma]d\sigma \nonumber \\
 & =\frac 12\widetilde{F_1}[(3F_0+\beta F_1)\|\bA\|_{\infty}+F_0F_1] s^2=:c_1s^2.
\end{align}
By Taylor expansion, there exists $\bm V\in\mathbb{R}^N, \|\bm V\|_{\infty}\le \beta + \delta_0\omega_0 F_0$ (from \eqref{over1}) such that
\begin{align*}
 \bmf(e^{s\bA}\bm u(t_m)+s\bmf(\bm u(t_m)))&=\bmf(e^{s\bA}\bm u(t_m))+s\frac{\partial \bmf}{\partial \bm u}(e^{s\bA}\bm u(t_m))\bmf(\bm u(t_m))+\frac{\partial^2\bmf}{\partial \bm u^2}(\bm V)\frac{{[s\bmf(\bm u(t_m))]^{\odot 2}}}{2}.   
\end{align*}
Thus one gets
\begin{equation}\label{I2est}
\|I_2\|_{\infty}\le \left\|\frac{\partial^2\bmf}{\partial \bm u^2}(\bm V)\frac{[s\bmf(\bm u(t_m))]^{\odot 2}}{2}\right\|_{\infty} \le \frac 12 F_0^2\widetilde{F_2} s^2=:c_2s^2.
\end{equation}
To estimate $I_3$, using Lemma \ref{lemma_phi_psi} and the mean value theorem, we obtain
\begin{equation}\label{I3est}
\|I_3\|_{\infty}=s\|\bm\psi(s)-\bm\psi(0)\|_{\infty}\le F_0(F_1+\beta F_2)\|\bA\|_{\infty}s^2=:  c_3s^2.
\end{equation}
For $I_4$ notice that
$$\|I_4\|_{\infty} = \left\|\bm\lambda(s)-\left(1-\frac{s}{\Delta t}\right)\bm\lambda(0)-\frac{s}{\Delta t}\bm\lambda(\Delta t)\right\|_{\infty}.$$
Using Taylor expansion, there exist $\bm V_1, \bm V_2\in\mathbb{R}^N$ satisfying $$\|\bm V_1\|_{\infty},\|\bm V_2\|_{\infty}\le \max\limits_{s\in[0,\Delta t]}\|\bm\lambda''(s)\|_{\infty}\overset{\text{Lemma \ref{lemma_phi_psi}}}{\le} (F_0+3\beta F_1+\beta^2F_2)\|\bA\|_{\infty}^2$$ subject to
$$\begin{cases}
\bm\lambda(0)=\bm\lambda(s)-s\bm\lambda'(s)+\frac{s^2}{2}\bm V_1,\\[2pt]
\bm\lambda(\Delta t) = \bm\lambda(s) +(\Delta t -s)\bm\lambda'(s) + \frac{(\Delta t -s)^2}{2} \bm V_2.
\end{cases}$$
Thus we have
\begin{align}\label{I4est}
 \|I_4\|_{\infty} &= \left\|\left(1-\frac{s}{\Delta t}\right)\frac{s^2}{2}\bm V_1 + \frac{s}{\Delta t}\frac{(\Delta t -s)^2}{2} \bm V_2\right\|_{\infty}\le \frac 12(F_0+3\beta F_1+\beta^2F_2)\|\bA\|_{\infty}^2(s\Delta t -s^2)=:c_4(s\Delta t -s^2).
\end{align}
Combining the above estimates \eqref{I1est}-\eqref{I4est} for $I_i, i=1,2,3,4$ yields
\begin{align}\label{error3.2}
 \|\bR_3(t_m)\|_{\infty} &\le \int_0^{\Delta t} (c_1+c_2+c_3)s^2+c_4(s\Delta t-s^2) \,ds =\left(\frac{c_1+c_2+c_3}{3}+\frac{c_4}{6}\right)\Delta t^3=:c\Delta t^3.   
\end{align}
From \eqref{error3.1} and \eqref{error3.2}, we get
\begin{align*}
 \|\bm e_{m+1}\|_{\infty}&\le \left(1+F_1\Delta t +\frac 12F_3\Delta t^2\right)\|\bm e_m\|_{\infty} + c\Delta t^3\le \left(1+F_4\Delta t\right) \|\bm e_m\|_{\infty} +c\Delta t^3.
\end{align*}
Therefore,
\begin{align*}
\|\bm e_{m}\|_{\infty}+\frac{c\Delta t^2}{F_4} \le (1+F_4\Delta t)^m\left(\|\bm e_{0}\|_{\infty}+\frac{c\Delta t^2}{F_4}\right).    
\end{align*}
Using the fact that $\bm e_0=0$, we deduce that
\begin{align*}
  \|\bm e_m\|_{\infty}\le \frac{c\Delta t^2}{F_4}[(1+F_4\Delta t)^m-1] \le \frac{c\Delta t^2}{F_4}(e^{F_4t_m}-1)=:C(e^{F_4t_m}-1)\Delta t^2,
\end{align*}
where $C=\frac{c}{F_4} = \frac{1}{F_4}\left(\frac{c_1+c_2+c_3}{3}+\frac{c_4}{6}\right)>0$, and $c_1,c_2,c_3,c_4, F_4$ depend on $\beta,\delta_0$, $\omega_0,F_0$, $F_1,\widetilde{F_1}$, $F_2,\widetilde{F_2}, F_3,$ and $\|\bA\|_{\infty}.$
\end{proof}

\begin{remark}
For the case of $f(u)=u-u^{3}$ (i.e., the Allen-Cahn equation with the      double-well potential), the estimates derived in Theorems~\ref{error_estimate_1a},  \ref{error_estimate_1b} and \ref{error_estimate_2} could be made simpler. In particular,  for the LRI1a and LRI1b schemes,  there exists $C=C(\|\bA\|_{\infty})>0$ such that the following inequality holds for all $0<\Delta t\le \frac 12$:
$$\|\bm U_m-\bm u(t_m)\|_{\infty} \le C(e^{2t_m}-1)\Delta t, \quad m=0,1,\ldots,M.$$
For the LRI2 scheme,  there exists $C=C(\|\bA\|_{\infty})>0$ so that for all $0<\Delta t\le \frac 35$ we have:
$$\|\bm U_m-\bm u(t_m)\|_{\infty} \le C(e^{\frac{16}{5}t_m}-1)\Delta t^2, \quad m=0,1,\ldots,M.$$
\end{remark}

\section{Numerical experiments}\label{section_numerical}

We now verify numerically the convergence rates in time, MBP preservation and energy stability of the proposed LRI schemes (LRI1a \eqref{LRI1a},  LRI1b~\eqref{LRI1b}, and LRI2 \eqref{LRI2a}) analyzed in the previous sections.  To illustrate the advantages of the LRI schemes,  we compare their performance with the ETD schemes \cite{Review} of orders 1 and 2 (i.e., ETD1 and ETDRK2) in terms of time discretization errors as the interfacial parameter $\varepsilon$ approaches zero.  Two test cases in the two dimensional domain $\Omega=(-0.5,0.5)^{2}$ are presented: Test case 1 having  a traveling wave solution and Test case 2 modeling the coarsen dynamics.  For the latter,  both double-well potential and Flory-Huggins potential functions will be considered in the numerical simulations.  \Rv{All numerical results are obtained via a fast implementation of the LRI and ETD methods based on matrix decomposition and Discrete Fourier Transform (DFT) as proposed in \cite{fEIF}.  The code is implemented in MATLAB on a MacBook Pro with the M1 Pro chip and 32 GB memory}.

\subsection{Test case 1: Traveling wave}

 To investigate the convergence in time of the proposed schemes,  we use the benchmark test~\cite{twave} where a traveling wave solution of the Allen-Cahn equation~\eqref{Allen-Cahn} with $f(u)=u-u^{3}$ is considered. In particular,  if the initial data is given by
\begin{equation}\label{test1:ic}
 u_{0}(x,y)=\frac{1}{2}\left (1-\tanh\left (\frac{x}{2\sqrt{2}\varepsilon}\right )\right ), \; (x,y) \in \overline{\Omega},
 \end{equation}
then the Allen-Cahn equation in the whole space has the traveling wave solution 
\begin{equation}  \label{test1:wavesoln}
u(x,y,t)=\frac{1}{2}\left (1-\tanh\left (\frac{x-st}{2\sqrt{2}\varepsilon}\right )\right ), \; (x,y) \in \overline{\Omega},
\end{equation}
where $s=\sfrac{3\varepsilon}{\sqrt{2}}$ is the speed of the traveling wave.  We impose a homogeneous Neumann boundary condition on the whole boundary $\partial \Omega$, so that \eqref{test1:wavesoln} can be used as an {\em approximate exact solution} (for $\varepsilon \ll 1$) on $\overline{\Omega} \times [0,T]$.  We take $\varepsilon \in \left \{0.02, 0.01, 0.005\right \}$, and set the ending time $T=\sfrac{1}{4s}$. 
With a fixed mesh size $h=1/2048$, we compute the numerical solutions obtained by the first-order methods (ETD1, LRI1a and LRI1b) and the second-order methods (ETDRK2 and LRI2) with decreasing time step sizes.  The $L^{2}$ and $L^{\infty}$ errors at time $t=T$  and their convergence rates of the first-order schemes are reported in Tables~\ref{tab:t1order1L2} and \ref{tab:t1order1Linf}, respectively.  The corresponding results for the second-order schemes are shown in Table~\ref{tab:t1order2}.  

\begin{table}[!htbp]\small
    \caption{[Test case 1] $L^{2}$ errors and convergence rates in time by the first-order ETD and LRI  schemes.} \label{tab:t1order1L2} 
\renewcommand{\arraystretch}{1}
    \centering
    \begin{tabular}{|*{8}{c|}}
    \hline
      \multirow{2}{*}{$\varepsilon$} &   \multirow{2}{*}{$\Delta t$} & \multicolumn{2}{c|}{ETD1} & \multicolumn{2}{c|}{LRI1a} & \multicolumn{2}{c|}{LRI1b}  \\ \cline{3-8}
        ~ &~  & $L^{2}$ Error & Rate & $L^{2}$ Error & Rate & $L^{2}$ Error & Rate  \\ \hline
        \multirow{6}{*}{0.02} & ${T}/{32}$ & 4.24e-2 & -- & 3.35e-2 & -- & 2.86e-2 & --  \\ 
        ~ &${T}/{64}$ & 2.24e-2 & 0.92 & 1.75e-2 & 0.93 & 1.50e-2 & 0.93  \\ 
         ~ &${T}/{128}$ &1.15e-2 & 0.96 & 8.98e-3 & 0.97 & 7.66e-3 & 0.97  \\ 
        ~ &${T}/{256}$ & 5.83e-3 & 0.98 & 4.55e-3 & 0.98 & 3.87e-3 & 0.98  \\ 
        ~ &${T}/{512}$ & 2.93e-3 & 0.99 & 2.29e-3 & 0.99 & 1.95e-3 & 0.99  \\  
        ~ &${T}/{1024}$ &1.47e-3 & 1.00 & 1.15e-3 & 1.00 & 9.77e-4 & 1.00  \\  \hline
        \multirow{6}{*}{0.01} & ${T}/{32}$ & 1.11e-1 & -- & 8.51e-2 & -- & 7.89e-2 & --  \\ 
        ~ &${T}/{64}$ & 6.36e-2 & 0.81 & 4.73e-2 & 0.85 & 4.37e-2 & 0.85  \\ 
        ~ &${T}/{128}$ & 3.40e-2 & 0.90 & 2.50e-2 & 0.92 & 2.30e-2 & 0.93  \\ 
        ~ &${T}/{256}$ & 1.76e-2 & 0.95 & 1.28e-2 & 0.96 & 1.18e-2 & 0.96  \\   
        ~ &${T}/{512}$ & 8.91e-3 & 0.98 & 6.48e-3 & 0.98 & 5.96e-3 & 0.98  \\   
        ~ &${T}/{1024}$ & 4.48e-3 & 0.99 & 3.25e-3 & 0.99 & 2.99e-3 & 0.99 \\  \hline
        \multirow{6}{*}{0.005} &${T}/{32}$ & 2.08e-1 & -- & 1.70e-1 & -- & 1.66e-1 & --  \\ 
        ~ &${T}/{64}$ & 1.45e-1 & 0.52 & 1.12e-1 & 0.61 & 1.08e-1 & 0.61  \\  
        ~ &${T}/{128}$ & 8.93e-2 & 0.70 & 6.51e-2 & 0.78 & 6.28e-2 & 0.79  \\ 
        ~ &${T}/{256}$ & 4.93e-2 & 0.86 & 3.49e-2 & 0.90 & 3.36e-2 & 0.90  \\ 
        ~ &${T}/{512}$ & 2.57e-2 & 0.94 & 1.80e-2 & 0.96 & 1.73e-2 & 0.96  \\ 
        ~ &${T}/{1024}$ & 1.30e-2 & 0.98 & 9.07e-3 & 0.99 & 8.69e-3 & 0.99 \\ \hline
    \end{tabular}
\end{table}

\begin{table}[!htbp]\small
\caption{[Test case 1] $L^{\infty}$ errors and convergence rates in time by the first-order ETD and LRI schemes.} \label{tab:t1order1Linf} 
\renewcommand{\arraystretch}{1}
    \centering
    \begin{tabular}{|*{8}{c|}}
    \hline
      \multirow{2}{*}{$\varepsilon$} &   \multirow{2}{*}{$\Delta t$} & \multicolumn{2}{c|}{ETD1} & \multicolumn{2}{c|}{LRI1a} & \multicolumn{2}{c|}{LRI1b}  \\ \cline{3-8}
        ~ &~  & $L^{\infty}$ Error & Rate & $L^{\infty}$ Error & Rate & $L^{\infty}$ Error & Rate  \\ \hline
        \multirow{6}{*}{0.02} & ${T}/{32}$ & 1.61e-1 & -- & 1.28e-1 & -- & 1.12e-1 & --  \\  
        ~ &${T}/{64}$ & 8.49e-2 & 0.93 & 6.70e-2 & 0.93 & 5.83e-2 & 0.93  \\  
        ~ &${T}/{128}$ &4.35e-2 & 0.96 & 3.43e-2 & 0.97 & 2.97e-2 & 0.97  \\  
        ~ &${T}/{256}$ & 2.20e-2 & 0.98 & 1.74e-2 & 0.98 & 1.50e-2 & 0.98  \\ 
        ~ &${T}/{512}$ & 1.11e-2 & 0.99 & 8.73e-3 & 0.99 & 7.54e-3 & 0.99  \\  
        ~ &${T}/{1024}$ &5.55e-3 & 1.00 & 4.38e-3 & 1.00 & 3.78e-3 & 1.00  \\  \hline
        \multirow{6}{*}{0.01} &${T}/{32}$ & 5.51e-1 & -- & 4.30e-1 & -- & 4.11e-1 & --  \\  
        ~ &${T}/{64}$ & 3.30e-1 & 0.74 & 2.47e-1 & 0.80 & 2.32e-1 & 0.82  \\   
        ~ &${T}/{128}$ & 1.78e-1 & 0.89 & 1.31e-1 & 0.91 & 1.22e-1 & 0.92  \\   
        ~ &${T}/{256}$ & 9.22e-2 & 0.95 & 6.76e-2 & 0.96 & 6.27e-2 & 0.97  \\
        ~ &${T}/{512}$ & 4.67e-2 & 0.98 & 3.42e-2 & 0.98 & 3.16e-2 & 0.99  \\ 
        ~ &${T}/{1024}$ & 2.35e-2 & 0.99 & 1.72e-2 & 0.99 & 1.59e-2 & 1.00 \\  \hline
        \multirow{6}{*}{0.005} &${T}/{32}$ & 9.68e-1 & -- & 9.06e-1 & -- & 9.04e-1 & --  \\ 
        ~ &${T}/{64}$ & 8.49e-1 & 0.19 & 7.13e-1 & 0.34 & 7.07e-1 & 0.35  \\   
        ~ &${T}/{128}$ & 6.05e-1 & 0.49 & 4.57e-1 & 0.64 & 4.48e-1 & 0.66  \\  
        ~ &${T}/{256}$ & 3.55e-1 & 0.77 & 2.54e-1 & 0.85 & 2.47e-1 & 0.86  \\  
        ~ &${T}/{512}$ & 1.88e-1 & 0.92 & 1.32e-1 & 0.94 & 1.28e-1 & 0.95  \\ 
        ~ &${T}/{1024}$ & 9.57e-2 & 0.97 & 6.69e-2 & 0.98 & 6.43e-2 & 0.99 \\ \hline
    \end{tabular}
\end{table}

\begin{table}[!htbp]\small
  \caption{[Test case 1] Errors and convergence rates in time by the second ETD and LRI  schemes.} \label{tab:t1order2}
\renewcommand{\arraystretch}{1}
    \centering
    \begin{tabular}{|*{10}{c|}}
    \hline
      \multirow{2}{*}{$\varepsilon$} &   \multirow{3}{*}{$\Delta t$} & \multicolumn{4}{c|}{ETDRK2} & \multicolumn{4}{c|}{LRI2}  \\ \cline{3-10}
        ~ &~  & $L^{2}$  & \multirow{2}{*}{Rate} & $L^{\infty}$ & \multirow{2}{*}{Rate} & $L^{2}$  & \multirow{2}{*}{Rate} & $L^{\infty}$  & \multirow{2}{*}{Rate} \\ 
        ~ &~  & Error  & ~ & Error & ~ & Error  & ~ & Error  & ~ \\ \hline
         \multirow{5}{*}{0.02}  & ${T}/{16}$  & 8.92e-3 & -- & 3.08e-2 & -- & 6.34e-3 & -- & 2.57e-2 & --  \\  
         & ${T}/{32}$  & 2.50e-3 & 1.83 & 8.83e-3 & 1.80 & 1.80e-3  & 1.82 & 7.24e-3 & 1.83  \\     
        ~ &${T}/{64}$ & 6.65e-4 & 1.91 & 2.36e-3 & 1.90 & 4.80e-4 & 1.91 & 1.92e-3 & 1.91  \\   
        ~ &${T}/{128}$ & 1.71e-4 & 1.95 & 6.10e-4 & 1.95 & 1.24e-4 & 1.95 & 4.94e-4 & 1.96  \\ 
        ~ &${T}/{256}$& 4.57e-5 & 1.91 & 1.53e-4 & 2.00 & 3.48e-5 & 1.84 & 1.43e-4 & 1.79  \\ 
        ~ &${T}/{512}$ & 1.95e-5 & 1.23 & 1.43e-4 & 0.10 & 1.81e-5 & 0.94 & 1.43e-4 & 0.00 \\ \hline 
         \multirow{5}{*}{0.01}  & ${T}/{16}$  & 3.75e-2 & -- & 1.80e-1 & -- & 2.97e-2 & -- & 1.62e-1 & --  \\ 
         & ${T}/{32}$  & 1.18e-2 & 1.67 & 5.92e-2 & 1.61 & 9.44e-3 & 1.65 & 5.14e-2 & 1.66  \\ 
        ~ &${T}/{64}$ & 3.35e-3 & 1.81 & 1.70e-2 & 1.80 & 2.68e-3 & 1.82 & 1.45e-2 & 1.83  \\ 
        ~ &${T}/{128}$ & 8.87e-4 & 1.92 & 4.52e-3 & 1.91 & 7.05e-4 & 1.92 & 3.80e-3 & 1.93  \\ 
        ~ &${T}/{256}$& 2.19e-4 & 2.02 & 1.12e-3 & 2.02 & 1.71e-4 & 2.04 & 9.28e-4 & 2.04  \\ 
        ~ &${T}/{512}$ & 4.45e-5 & 2.30 & 2.30e-4 & 2.28 & 3.25e-5 & 2.40 & 1.81e-4 & 2.36 \\  \hline 
        \multirow{5}{*}{0.005}  & ${T}/{16}$  & 2.74e-1 & -- & 7.06e-1 & -- & NaN & -- & NaN & --  \\ 
        & ${T}/{32}$ & 4.87e-2 & 2.49 & 3.38e-1 & 1.06 & 4.23e-2 & -- & 3.13e-1 & --   \\ 
        ~ &${T}/{64}$ & 1.67e-2 & 1.55 & 1.17e-1 & 1.53 & 1.36e-2 & 1.64 & 1.02e-1 & 1.62  \\ 
        ~ &${T}/{128}$ & 6.74e-3 & 1.31 & 2.77e-2 & 2.08 & 3.78e-3 & 1.84 & 2.82e-2 & 1.85  \\ 
        ~ &${T}/{256}$ & 3.05e-2 & -2.18 & 7.06e-2 & -1.35 & 9.24e-4 & 2.03 & 6.90e-3 & 2.03  \\ 
        ~ &${T}/{512}$ & 3.61e-2 & -0.24 & 7.65e-2 & -0.12 & 1.54e-4 & 2.59 & 1.17e-3 & 2.57    \\  \hline
    \end{tabular}
  \end{table}

For the first-order schemes, we observe expected first-order temporal convergence rates in both $L^{2}$ and $L^{\infty}$ norms with various values of $\varepsilon$. In addition, the errors by two LRI1 schemes are always smaller than those by ETD1; the performance of LRI1a and LRI1b is quite similar as $\varepsilon$ decreases.  For the second-order schemes, we first notice that the errors are much smaller compared to the first-order schemes with the same time step sizes,  especially when $\varepsilon$ is close to zero.  Importantly,  second-order temporal convergence rates of LRI2 are confirmed even for small $\varepsilon$. For $\varepsilon=0.02$,  the errors in space are dominant when $\Delta t$ is sufficiently small, that is why the convergence rates deteriorate after several refinements in time.  Moreover, we remark that the errors are computed using the approximate exact solution~\eqref{test1:wavesoln} which is accurate when $\varepsilon$ is close enough to zero. 
Unlike LRI2,  ETDRK2 fails to converge when $\varepsilon=0.005$; a similar behavior was also observed for the Navier-Stokes equations with the classical exponential integrator when the viscosity coefficient is close to zero~\cite{LRINS}.  Furthermore,  as for the first-order schemes,  the errors by ETDRK2 are always bigger than those by LRI2 with the same $\Delta t$ for all considered values of $\varepsilon$; this phenomenon becomes much more significant when $\varepsilon$ gets smaller. 

\Rv{We report in Table~\ref{tab:t1CPUtime} the running times (in seconds) of LRI and ETD methods with fixed $h=1/2048$ and varying time step sizes. The running times are similar for different values of $\varepsilon$, thus we only show the results when $\varepsilon=0.01$.  We observe that for the first-order schemes,  LRI1a and LRI1b have similar running times and are faster than ETD1.  For the second-order methods,  LRI2 also outperforms ETDRK2 regarding CPU time.  Thus, we conclude that LRI1 and LRI2 schemes are more effective than ETD1 and ETDRK2 methods, respectively,  in terms of both accuracy and computational cost. }

%
\begin{table}[!htbp] \small
\setlength{\extrarowheight}{2pt}
    \caption{\Rv{[Test case 1] Running times (in seconds) of the first and second-order LRI and ETD schemes with $h = 1/2048$.}} \label{tab:t1CPUtime} 
\renewcommand{\arraystretch}{1}
    \centering
    \Rv{
    \begin{tabular}{|*{6}{c|}}
    \hline
    $\Delta t$ & ETD1 & LRI1a & LRI1b & ETDRK2 & LRI2  \\ \hline
        ${T}/{32}$ &  29.77& 24.12&24.13 & 41.51&35.14\\ 
        ${T}/{64}$ & 47.08& 35.73& 35.79& 70.57&57.95\\ 
        ${T}/{128}$ & 82.13& 59.42& 59.51 & 129.20&103.76\\ 
        ${T}/{256}$ & 152.24& 107.50& 106.96 & 246.61&194.71\\ 
        ${T}/{512}$ &  296.41& 204.15& 203.84& 480.35&379.02\\ \hline
    \end{tabular}}
\end{table}

%

\subsection{Test case 2: Coarsening dynamics}

Next we validate  MBP preservation and energy dissipation of the proposed LRIs.  Toward that end,  we simulate the process of the coarsening dynamics by considering a random initial configuration.  
The model problem is given by the Allen-Cahn equation~\eqref{Allen-Cahn} with periodic boundary conditions.  
 We consider two choices for the nonlinear function $f(u)$:
\begin{equation} \label{eq:nonlinfu}
f(u)=f_{1}(u)=u-u^{3}, \quad \text{or} \quad f(u)=f_{2}(u)=\frac{\theta}{2} \text{ln}\frac{1-u}{1+u} + \theta_{c} u,
\end{equation}
corresponding to the      double-well and Flory-Huggins potential cases,  respectively.  For the latter, the parameters are $\theta=0.8$ and $\theta_{c}=1.6$ as in Remark~\ref{rmk:MBPtimestep}. 
We set $\varepsilon=0.01$, and fix the spatial mesh $h=1/1024$. The initial data is generated by random numbers on each mesh point, the range of these numbers will be specified for each nonlinear function considered.

When $f=f_{1}$ (double-well potential case) in \eqref{eq:nonlinfu},  we run the simulation with the random initial data ranging from $-1$ to $1$ and a uniform time step of size $\Delta t= 0.5$ for LRI1a and LRI1b, and $\Delta t=0.6$ for LRI2.  Note that these are the upper bounds of the time steps predicted by our theoretical results (cf. Remark~\ref{rmk:MBPtimestep}).  Evolutions of the supremum norm and the energy of the numerical solutions are depicted in Figure~\ref{fig:t2dwMBP}.  It can be seen that  the numerical solutions obtained are all bounded by $1$ in absolute value, and the associated energies are dissipative over a long time horizon. Figure~\ref{fig:t2dwsoln} shows the snapshots of the numerical solution at $t=5, 10, 20,40,80,$ and $120$,  generated by using the LRI2 scheme with a smaller time step $\Delta t=0.1$; when $\Delta t=0.05$,  we  obtain very similar results for the evolution of the numerical solution.

\begin{figure}[!htbp]
    \centering
    \begin{subfigure}[b]{0.43\textwidth}
        \includegraphics[width=\textwidth]{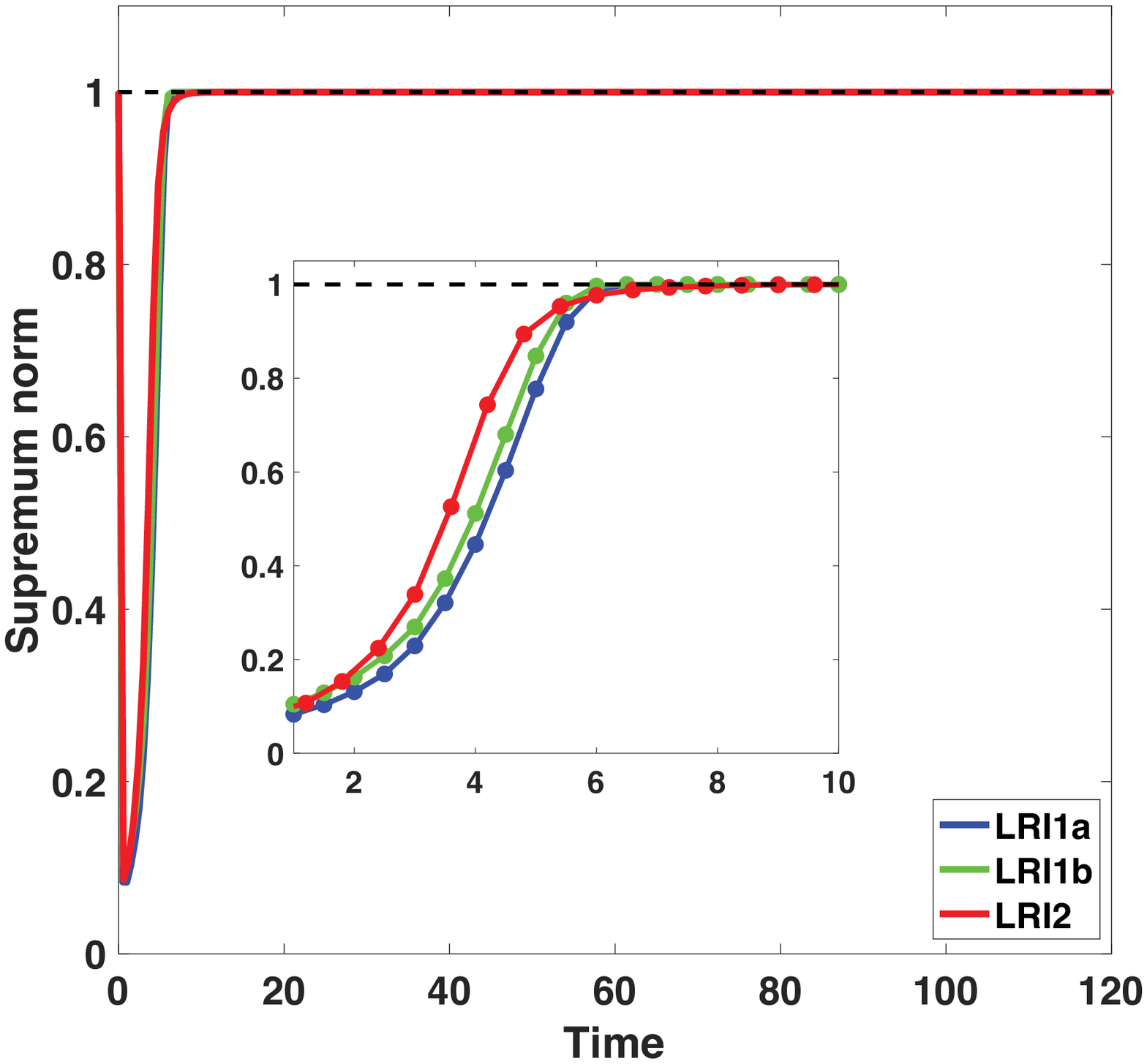}
    \end{subfigure} \hspace{0.2cm}
    \begin{subfigure}[b]{0.44\textwidth}
        \includegraphics[width=\textwidth]{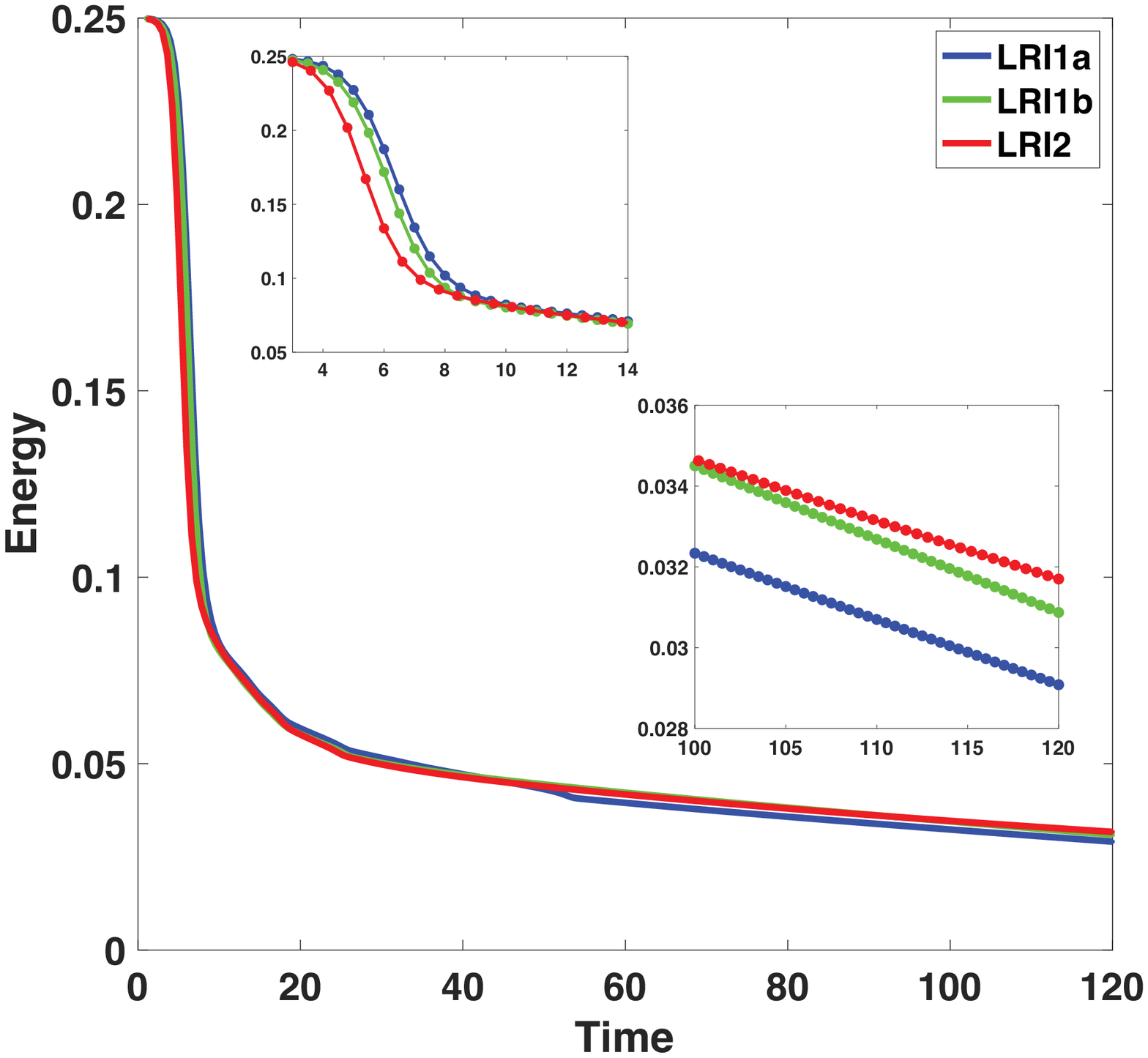}
    \end{subfigure}
    \caption{[Test case 2 with the double-well potential $f_1$] Evolutions of the supremum norm (left) and the energy (right) of the numerical solutions by the LRI1a and LRI1b schemes with $\Delta t =$ 0.5 and by the LRI2 scheme with $\Delta t =$ 0.6. }\label{fig:t2dwMBP} 
\end{figure}
\begin{figure}[!htbp]
    \centerline{
        \includegraphics[width=0.36\textwidth]{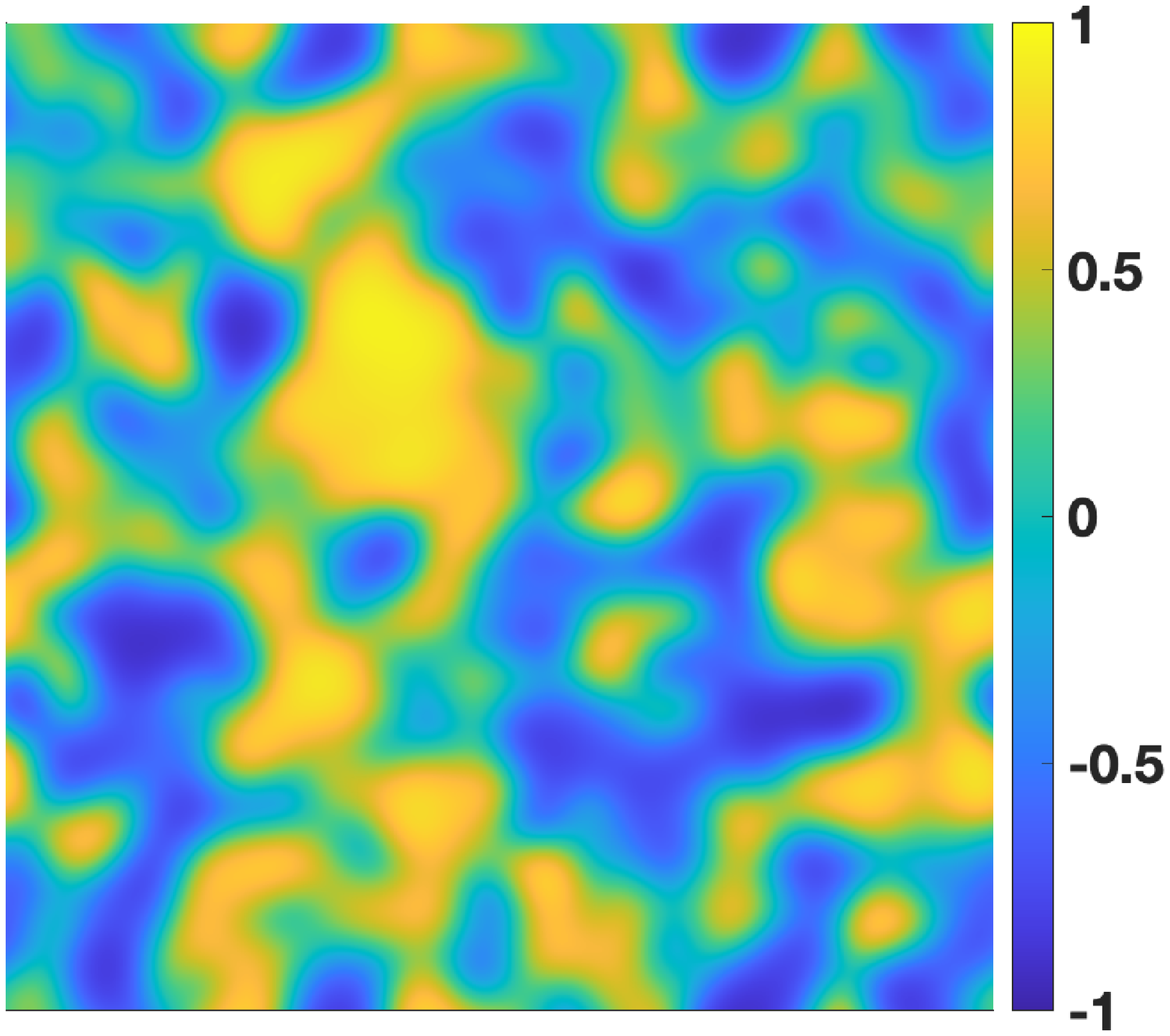}\hspace{-0.5cm}
        \includegraphics[width=0.36\textwidth]{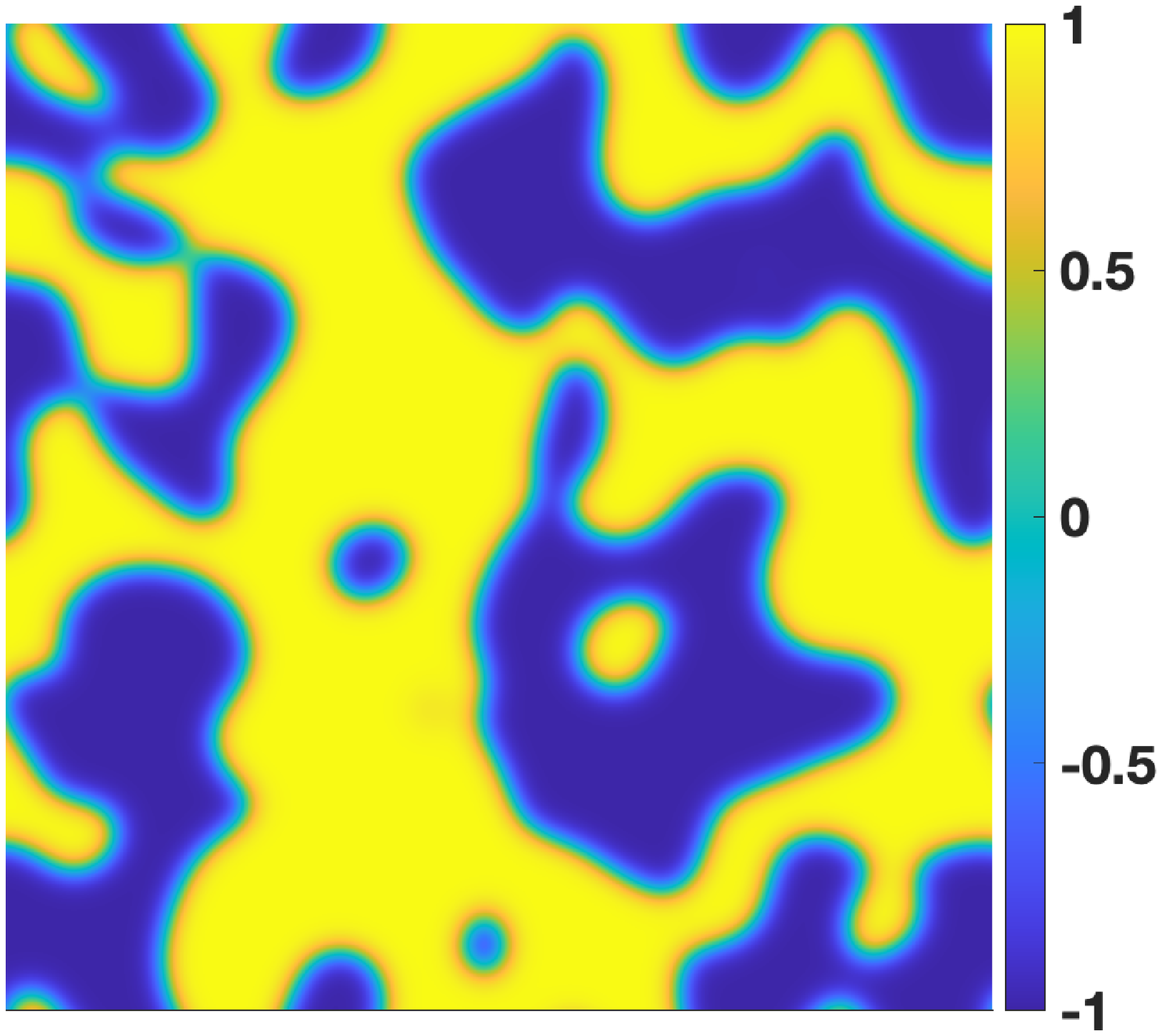}\hspace{-0.5cm}
        \includegraphics[width=0.36\textwidth]{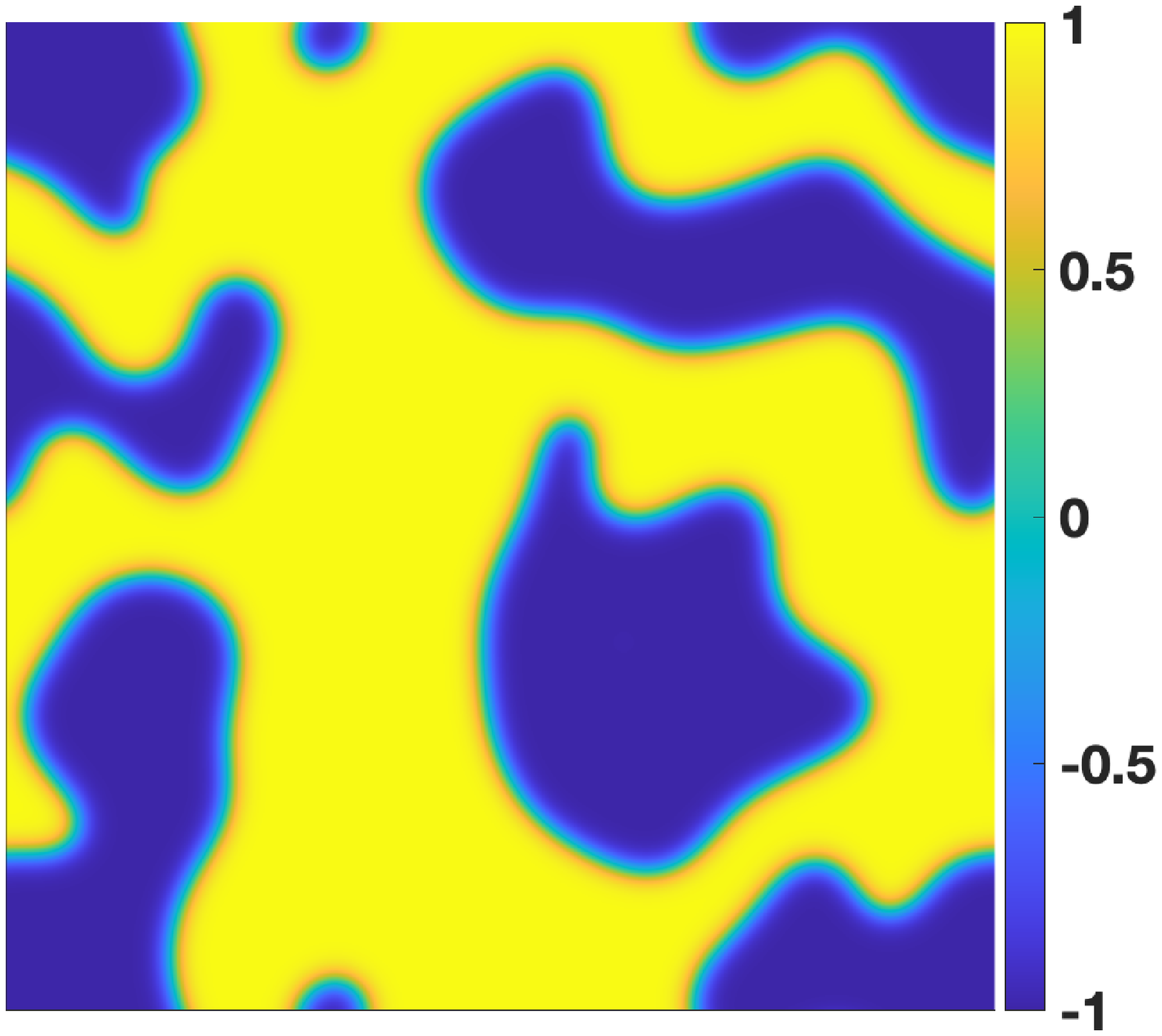}}
    \centerline{
        \includegraphics[width=0.36\textwidth]{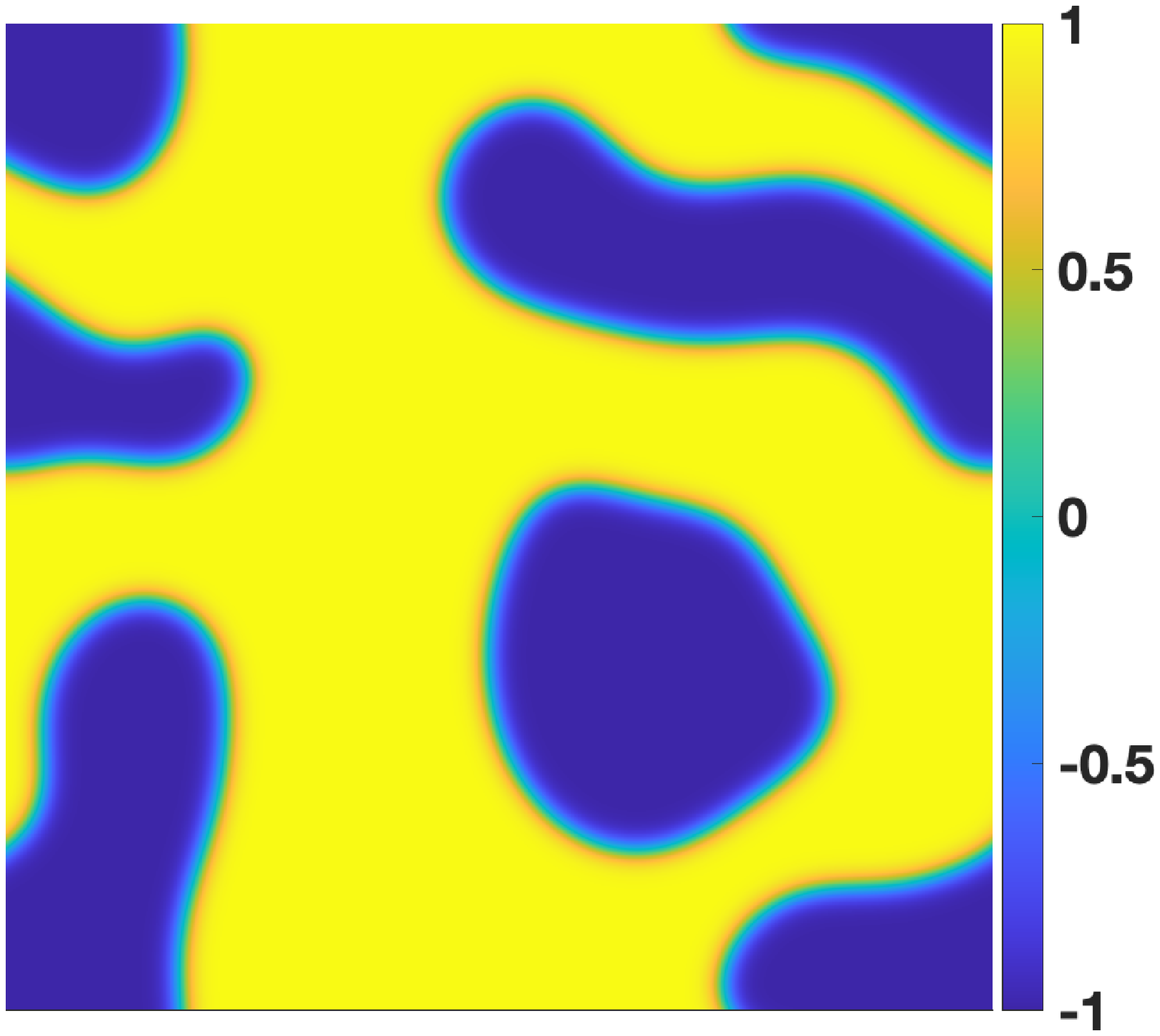}\hspace{-0.5cm}
        \includegraphics[width=0.36\textwidth]{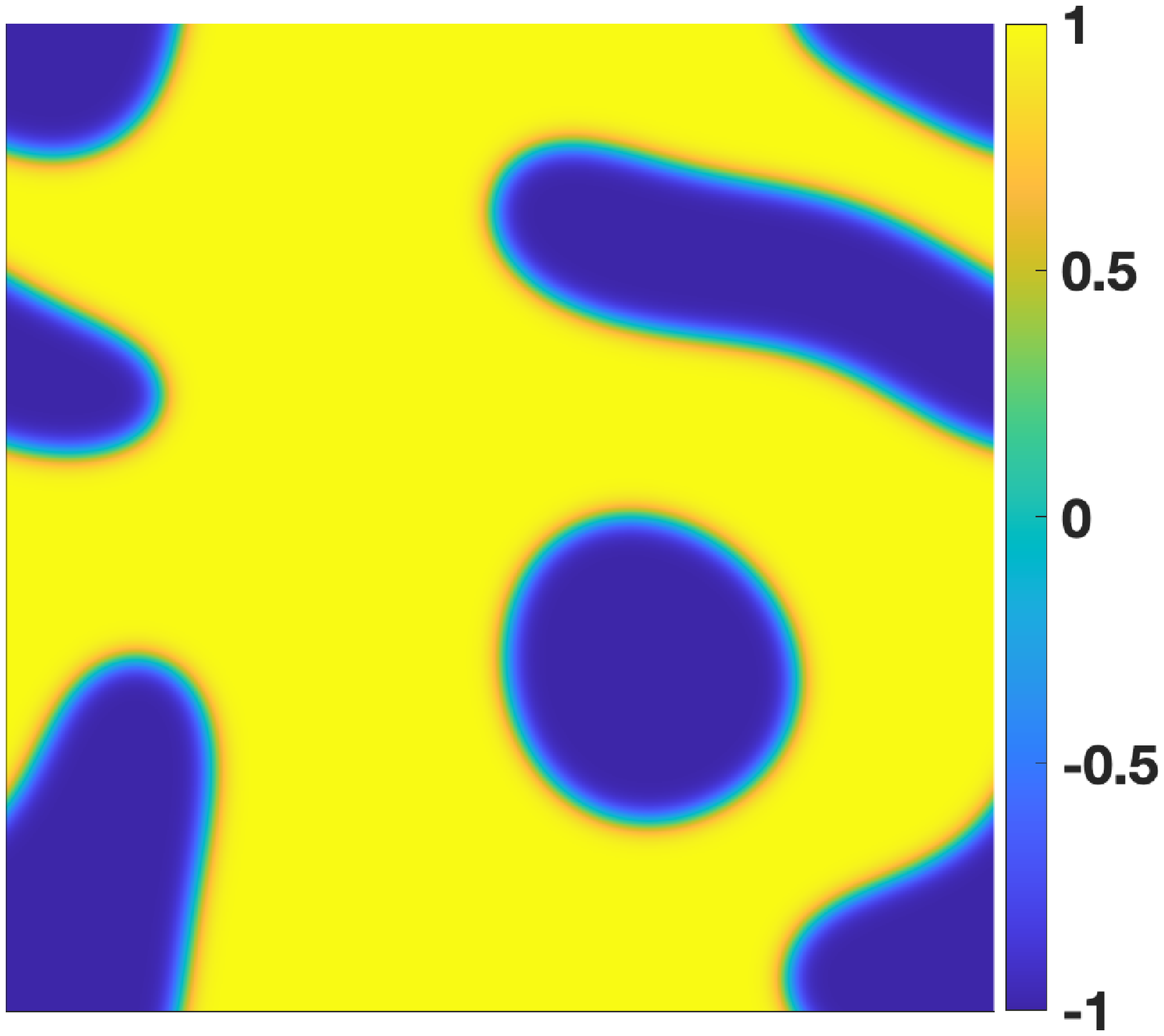}\hspace{-0.5cm}
        \includegraphics[width=0.36\textwidth]{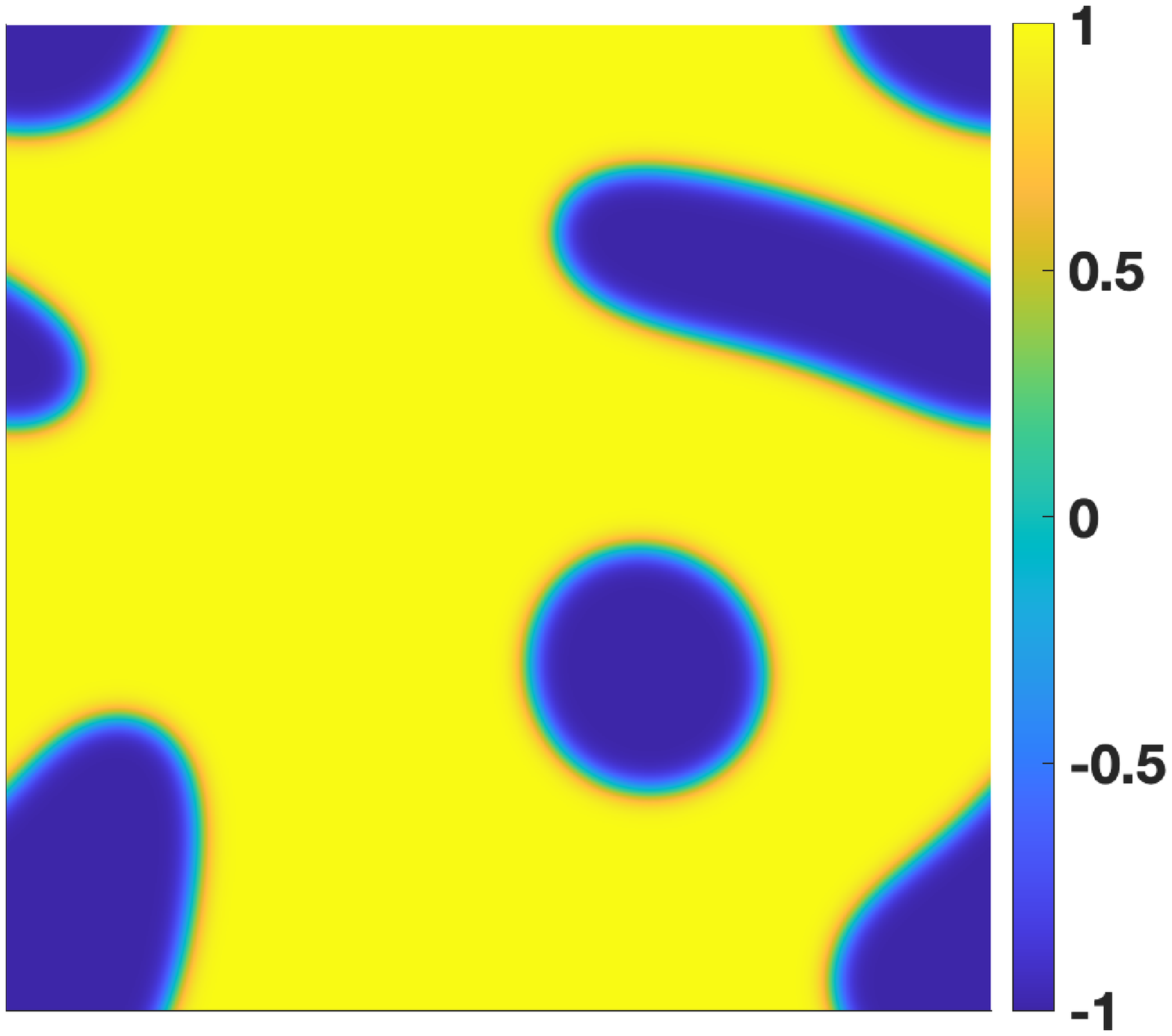}}
    \caption{[Test case 2 with the  the double-well potential $f_1$] Snapshots of the numerical solution at $t=$ 5, 10, 20, 40, 80 and 120 (from left to right and top to bottom) produced by the LRI2 scheme with $\Delta t$ = 0.1.}\label{fig:t2dwsoln} \vspace{-0.4cm}
\end{figure}

\begin{figure}[!htbp]
    \centering
    \begin{subfigure}[b]{0.43\textwidth}
        \includegraphics[width=\textwidth]{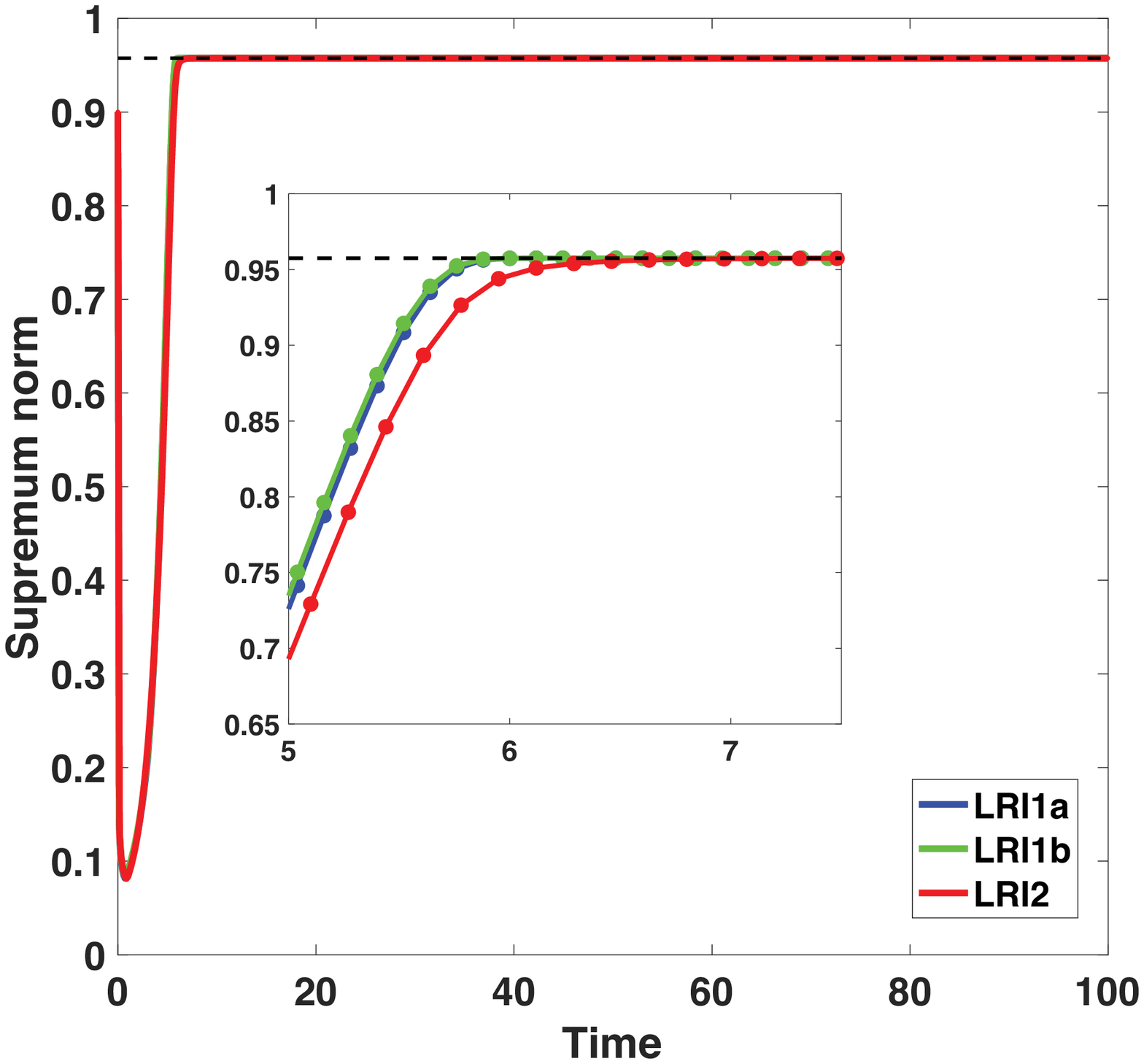}
    \end{subfigure} \hspace{0.2cm}
    \begin{subfigure}[b]{0.44\textwidth}
        \includegraphics[width=\textwidth]{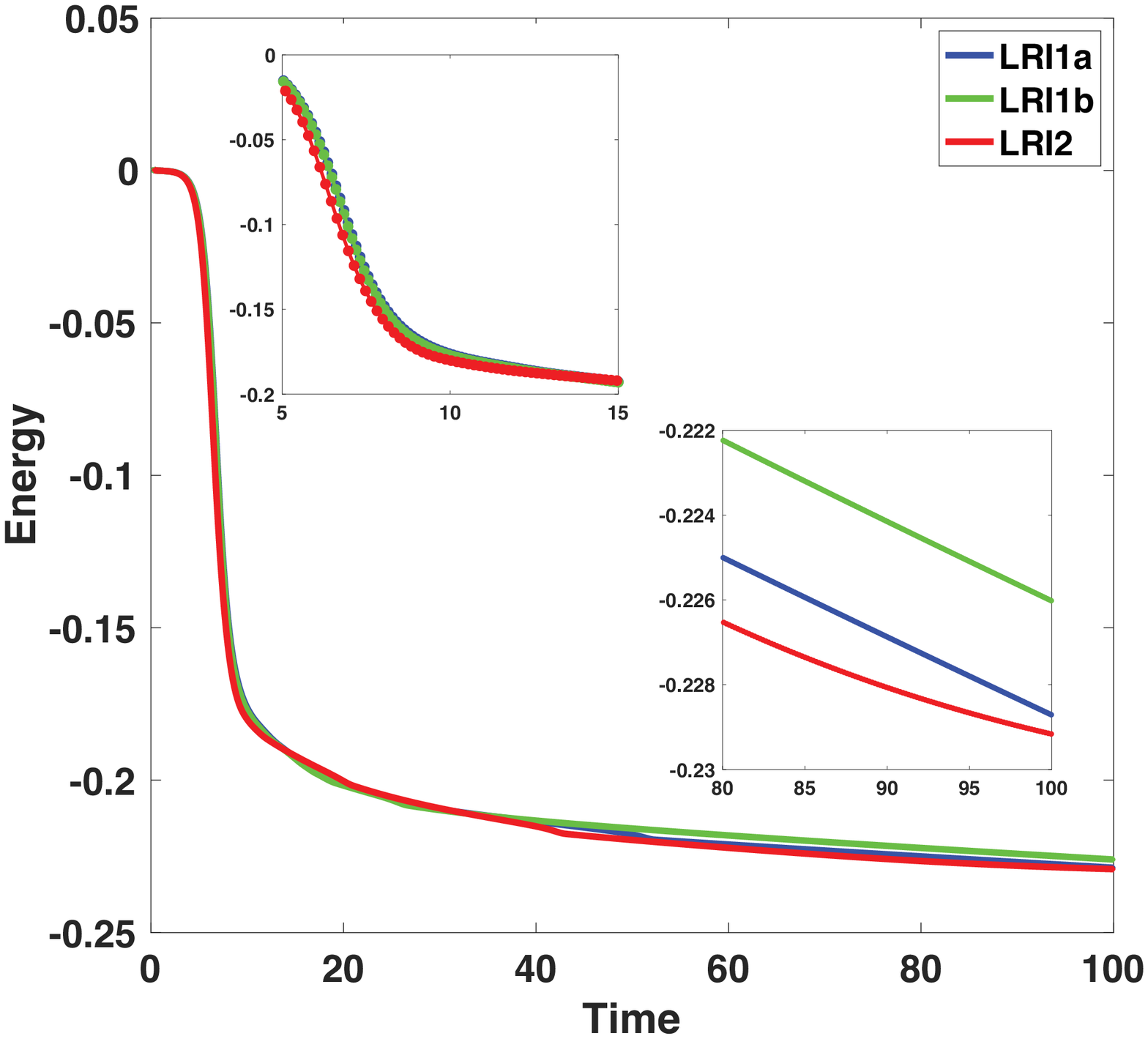}
    \end{subfigure}
    \caption{[Test case 2 with the Flory-Huggins potential $f_2$] Evolutions of the supremum norm (left) and the energy (right) of the numerical solutions by the  LRI1a and LRI1b schemes with $\Delta t =$ 0.12 and by the LRI2 scheme with $\Delta t =$ 0.17. }\label{fig:t2fhMBP}  \vspace{-0.4cm}
\end{figure}

When $f=f_{2}$ (Flory-Huggins potential case) in \eqref{eq:nonlinfu}, we use the random initial data ranging from $-0.9$ to $0.9$. A uniform time step of of size $\Delta t= 0.12$ is used for LRI1a and LRI1b, and $\Delta t=0.17$ for LRI2, which are close to the upper bounds estimated in Remark~\ref{rmk:MBPtimestep}.  Evolutions of the supremum norm and the energy of the numerical solutions  are plotted in Figures~\ref{fig:t2fhMBP}. We again observe that the LRI schemes preserve MBP very well with the chosen time steps,  i.e.,  the supremum norm of numerical solutions does not exceed $\beta\approx 0.9575$,  and the energy monotonically decays along the time.

\section{Conclusion}\label{section_conclusion}

In this work, we have studied some low regularity integrators for a class of semilinear parabolic equations with MBP. We introduce the first-order (LRI1a and LRI1b) and the second-order (LRI2) schemes, which preserve the MBP under very reasonable time step sizes.  Through the case of Allen-Cahn equation, we elaborate that the range of the time step can be enlarged when the second-order scheme is used.  We show that the discrete energy functional is bounded uniformly for the proposed LRI schemes by using the MBP and the mean value theorem. Then we also 
 prove  the optimal temporal  error estimates (under a fixed spatial mesh size) of the LRI numerical solutions under the assumption that the exact space-discrete solution is continuous, which is a much weaker regularity  requirement compared to other methods such as ETD or IFRK. 
Finally, some numerical experiments are performed and the results confirm the theoretical analysis and demonstrate the advantages of using the proposed LRI schemes.  Our ongoing work includes \Rv{ the fully discrete error analysis for the proposed LRI schemes when both the mesh size and the time step size simultaneously go to zero,} and the application of LRIs to other types of phase-field equations.  In addition,  higher-order LRI schemes will also be considered,  with low regularity requirements for the exact solution and high regularity assumptions on the nonlinear term. 

\section*{Acknowledgements}
T.-T.-P. Hoang's work is partially supported by U.S. National Science Foundation under grant number DMS-2041884.  L. Ju's work is partially supported by U.S. National Science Foundation under grant number DMS-2109633. K. Schratz's work is partially funded by European Research Council (ERC) under the European Union’s Horizon 2020 research and innovation programme (grant agreement No. 850941).

\section*{Declarations} 

{\bf Conflict of interest} The authors declare that there is no conflict of interest.

\end{document}